\documentclass[10pt]{article}

\usepackage{amssymb,amsthm,amsmath,hyperref}

\numberwithin{equation}{section}

\newtheorem{thm}{Theorem}[section]
\newtheorem{lem}{Lemma}[section]

\theoremstyle{definition}

\theoremstyle{remark}
\newtheorem{rem}{Remark}[section]

\allowdisplaybreaks

\setlength{\textheight}{23.1cm} \setlength{\textwidth}{16cm}
\hoffset=-1.7cm \voffset=-2.6cm

\begin{document}

\title{An elementary proof of the global existence and uniqueness theorem to 2-D incompressible non-resistive MHD system}

\author{  Ting Zhang \\
  Department of Mathematics, Zhejiang University,
Hangzhou 310027, China}
\date{}
\maketitle

\begin{abstract}
 In this paper, we provide a much simplified proof of the main result
in \cite{Lin12-2} concerning the global existence and uniqueness of smooth
solutions to the Cauchy problem for a 2D incompressible viscous and non-resistive MHD system under the assumption that the initial data are close to some
equilibrium states. Beside the classical energy method, the interpolating
inequalities and the algebraic structure of the equations coming from the
incompressibility of the fluid are crucial in our arguments. We
combine the energy estimates with the $L^\infty$ estimates for time
slices to deduce the key $L^1$ in time estimates. The latter is
responsible for the global in time existence.

\textbf{Keywords}: MHD system; existence; uniqueness.

\textbf{2010 AMS Subject Classification}: 35Q35, 76W05.
\end{abstract}

\section{Introduction.}
In this paper, we consider the global existence of strong solutions to the following 2D incompressible viscous and non-resistive magnetohydrodynamics(MHD) system \cite{Lin12-3,Lin12-2},
   \begin{equation}
  \left\{
  \begin{array}{l}
    \partial_t \phi +v\cdot\nabla \phi=0,\ \ (t,x)\in\mathbb{R}^+\times\mathbb{R}^2,\\
        \partial_t v+v\cdot\nabla v-\Delta v+\nabla p=-\mathrm{div}[\nabla \phi\otimes\nabla \phi],\\
            \mathrm{div}v=0,\\
            (\phi,v)|_{t=0}=(\phi_0,v_0),
  \end{array}
  \right.\label{m0-E1.1}
\end{equation}
where the initial data $(\phi_0,v_0)$ close enough to the equilibrium state $(x_2,0)$.
 Here $\phi$,  $v=(v_1,v_2)^\top$ and $p$
denote the magnetic potential, velocity and scalar pressure of the fluid respectively.

The system (\ref{m0-E1.1}) is formally equivalent to the following 2D MHD system
    \begin{equation}
      \left\{
      \begin{array}{l}
        \partial_t b+v\cdot\nabla b =b\cdot\nabla v,\ \ (t,x)\in\mathbb{R}^+\times\mathbb{R}^2,\\
        \partial_t v+v\cdot\nabla v-\nu \Delta v+\nabla p=-\frac{1}{2}\nabla(|b|^2)+b\cdot\nabla b,\\
            \mathrm{div}v=\mathrm{div}b=0,\\
               ( b,v)|_{t=0}=(b_0, v_0),
      \end{array}
      \right.\label{m0-E1.2}
    \end{equation}
where $b=(b_1,b_2)^{\top}$,   $v=(v_1,v_2)^{\top}$ and $p$
denote the magnetic field, velocity and scalar pressure of the fluid respectively. In face, the condition $\mathrm{div}b=0$ implies the existence of a scalar function $\phi$ such that $b=(\partial_2\phi,-\partial_1\phi)^\top$, and the system (\ref{m0-E1.2}) becomes  the (\ref{m0-E1.1}). Magnetohydrodynamics (MHD) is the study of the dynamics of electrically conducting fluids, such as plasmas, liquid metals, and salt water or electrolytes. The MHD system describes   many phenomena such as the geomagnetic dynamo in geophysics and solar winds and solar flares in astrophysics \cite{Alfven,Biskamp,Davidson,Priest}.

There are many interesting results on the global regularity problem for the 2D MHD system with dissipation.  The viscous and resistive MHD system has a unique global classical solution with the initial data $(v_0,b_0)\in H^m(\mathbb{R}^2)$, $m\geq2$ (e.g. \cite{Duvaut,Sermange}). The inviscid and resistive MHD system
 has a global weak solution with $(v_0,b_0)\in H^1(\mathbb{R}^2)$ (e.g. \cite{Cao}). With mixed partial dissipation and additional (artificial) magnetic diffusion in the 2D incompressible MHD system, Cao and Wu \cite{Cao} proved its global wellposedness results for any initial data in $H^2(\mathbb{R}^2)$.
Considering the ideal MHD system (i.e. inviscid and non-resistive), Bardos, Sulem and Sulem \cite{Bardos} proved the global existence of the classical solution when the initial data $(b_0,v_0)$ close to  the equilibrium state $(B_0,0)$. It is point out in \cite{Bardos} that the fluctuations $v+b-B_0$ and $v-b+B_0$ propagate along the $B_0$ magnetic field in opposite directions. Then, a strong enough magnetic field will reduce the nonlinear interaction and inhibit formation of strong gradients \cite{Bardos,Frisch,Kraichnan}.
Under some special conditions on the initial data,  F.H. Lin, L. Xu and P. Zhang \cite{Lin12-2} can transfer the system (\ref{m0-E1.1}) to the  2D viscoelastic fluid system.
Using the Lagrangian transformation method and the anisotropic Littlewood-Paley analysis techniques,   they \cite{Lin12-2} obtained  the global wellposedness result. The arguments involved, despite its
general interests, were rather complicated. The aim of this note is to
give a new and simple proof, which involves only the energy estimate
method, interpolating inequalities and couple elementary observations.

After substituting $(\phi,v)=(x_2+\psi, v)$ into (\ref{m0-E1.1}), one obtains the following system for $(\psi,v)$:
 \begin{equation}
  \left\{
  \begin{array}{l}
    \partial_t \psi +v\cdot\nabla \psi+  v_2=0,\ \ (t,x)\in\mathbb{R}^+\times\mathbb{R}^2,\\
        \partial_t v_1+v\cdot\nabla v_1 -\Delta v_1+\partial_1 p+ \partial_1\partial_2\psi=
        -\mathrm{div}[\partial_1 \psi \nabla \psi],\\
        \partial_t v_2+v\cdot\nabla v_2 -\Delta v_2+\partial_2 p+ (\Delta+\partial_2^2)\psi=
        -\mathrm{div}[\partial_2 \psi \nabla \psi],\\
            \mathrm{div}v=0,\\
            (\psi,v)|_{t=0}=(\psi_0,v_0).
  \end{array}
  \right.\label{m0-E1.5-N2}
\end{equation}
Here and in what follows, we denote that   $\partial_i=\partial_{x_i}$ with $i\in\{1,2\}$.

It is easy to obtain the local existence and uniqueness of the solution for the system (\ref{m0-E1.5-N2}), see Theorem \ref{m0-Thm3.1}. In this paper, we will prove the following global existence and uniqueness of the solution of the  system (\ref{m0-E1.5-N2}) with the small and smooth initial data. Assume that the initial data satisfy
    \begin{equation}
     \nabla \psi_0,v_0\in
         H^2(\mathbb{R}^2),
       \ \mathrm{div} v_0=0, \label{m0-E1.3}
    \end{equation}
    and
    \begin{equation}
        e^{- {|\xi|^2t} }\widehat{\nabla\psi}_0(\xi),
      \ e^{- {|\xi|^2t} }\widehat{v}_{0}(\xi)\in{L^2([0,\infty);L^1_\xi)} ,\label{m0-E1.3-09}
    \end{equation}
    where $\widehat{f}$ is the Fourier transform of the function $f$.
Let
    $
    A_{0}=A_{1,0}+A_{2,0}
    $ with
        $$
        A_{1,0}= \|\nabla \psi_0\|_{H^2}+
                  \|v_0\|_{H^2},\
    A_{2,0}=
       \| e^{- {|\xi|^2t} }\widehat{\nabla\psi}_0\|_{L^2([0,\infty);L^1_\xi)}+
      \|e^{- {|\xi|^2t}}\widehat{v}_{0}\|_{L^2([0,\infty);L^1_\xi)},
    $$
and  $ A_{T}=A_{1,T}+A_{2,T}$ with
$$
        A_{1,T}= \|v\|_{L^\infty([0,T]; H^2 )}+\|\nabla \psi\|_{L^\infty([0,T];H^2)}
      +\|\nabla v\|_{L^2([0,T]; H^2 )}+\|\partial_1\nabla \psi\|_{L^2([0,T]; H^1 )},
        $$
    $$
    A_{2,T}=
     \|\widehat{v} (\xi)\|_{L^2([0,T]; L^1_\xi )}+ \|\widehat{\partial_1  \psi}(\xi)\|_{L^2([0,T];L^1_\xi )},
    $$
Denote
 $$
 E^n_T= \left\{ (\psi,v,\nabla p)\left| \begin{array}{l}
    \nabla \psi,v, \in C([0,T];H^n),\nabla p\in C([0,T]; H^{n- 1}), \\
     (\partial_1\nabla \psi, \nabla v)
   \in L^2([0,T]; H^{n- 1} \times  H^n ),\ \widehat{v} (\xi),\ \widehat{\partial_1  \psi}(\xi)\in{L^2([0,T]; L^1_\xi  )},
   \end{array}
   \right.
   \right\}.
    $$
We use the notation $E^n$ if $T=\infty$ by changing the time interval $[0,T]$ into $[0,\infty)$ in the above definition.
\begin{thm}\label{m0-Thm1.1}
Assume that the initial data $(\psi_0,v_0)$ satisfy (\ref{m0-E1.3})-(\ref{m0-E1.3-09}),  then there exists
    a positive constant $c_0$ such that if
        \begin{equation}
         A_0\leq c_0,\label{m0-E1.8}
        \end{equation}
    then the system (\ref{m0-E1.5-N2}) has a unique global solution $(\psi,v,\nabla p)\in E^2$ satisfying
                 \begin{eqnarray}
      A_T
      &\leq& CA_{0},\label{m0-E1.12}
    \end{eqnarray}
and
    \begin{equation}
      \|\nabla p\|_{L^\infty([0,T]; H^1 )}
       \leq  C(1+
      c_0)A_0,\label{m0-E1.13}
    \end{equation}
    for all $T>0$,
where $C$ is a positive constant independent of $T$.
\end{thm}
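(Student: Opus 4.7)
The plan is to combine the local existence theorem (Theorem~\ref{m0-Thm3.1}) with uniform a priori bounds on $A_T$ via a continuity/bootstrap argument. Precisely, on any existence interval $[0,T]$ I will establish an inequality of the form $A_T\le C(A_0+A_T^2)$; continuity of $T\mapsto A_T$ together with the smallness assumption $A_0\le c_0$ then yields $A_T\le 2CA_0$ for every $T>0$ in the maximal interval, whence global existence follows by standard continuation. Uniqueness is already supplied by Theorem~\ref{m0-Thm3.1}.

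The heart of the argument is the $H^2$ energy estimate for $(\psi,v)$. The linearization of (\ref{m0-E1.5-N2}) around $(0,0)$, after Leray projection, takes the form $\partial_t v-\Delta v=\partial_1 b'$, $\partial_t b'=\partial_1 v$, with $b'=(\partial_2\psi,-\partial_1\psi)^\top$; this is a parabolic--hyperbolic coupling that dissipates $\partial_1$-derivatives, which is the linear mechanism behind the Alfv\'en-wave stabilization invoked in \cite{Bardos,Lin12-2}. Applying $\partial^\alpha$ with $|\alpha|\le2$ to each equation of (\ref{m0-E1.5-N2}) and testing the $v$-equations against $\partial^\alpha v$ and the $\psi$-equation against the right linear combination of $\partial^\alpha\psi$ and $\partial^\alpha v$-components, so that the cross terms coming from $\partial_1\partial_2\psi$ and $(\Delta+\partial_2^2)\psi$ cancel against $v_2$ after integration by parts (using $\mathrm{div}\,v=0$), produces the basic energy identity
\[
\frac{d}{dt}\bigl(\|\nabla\psi\|_{H^2}^2+\|v\|_{H^2}^2\bigr)+\|\nabla v\|_{H^2}^2\le C\cdot\mathrm{NL},
\]
where $\mathrm{NL}$ gathers the cubic nonlinear contributions. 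A complementary ``cross'' estimate evaluating $\frac{d}{dt}\int \partial^\alpha v\cdot \partial^\alpha\nabla\psi$ for $|\alpha|\le1$ then produces the missing dissipation $\|\partial_1\nabla\psi\|_{H^1}^2$ (modulo lower-order terms absorbed into the basic energy and further nonlinearities), closing the $A_{1,T}$ bound.

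The cubic errors arising from $v\cdot\nabla v$, $v\cdot\nabla\psi$, and $\mathrm{div}[\partial_i\psi\,\nabla\psi]$ are handled by placing an $L^\infty_x$ norm on one factor in each triple product; the Hausdorff--Young embedding $\|u\|_{L^\infty_x}\le\|\widehat u\|_{L^1_\xi}$ combined with Cauchy--Schwarz in time bounds them by $A_{2,T}\cdot A_{1,T}^2$, which is exactly the purpose of the auxiliary norm $A_{2,T}$. To propagate $A_{2,T}$ itself I use a Duhamel representation in Fourier: $\widehat v$ and $\widehat{\partial_1\psi}$ satisfy heat-type evolutions with quadratic source terms, so the homogeneous contribution is bounded by $A_{2,0}$ (this is precisely why $A_{2,0}$ is formulated as $\|e^{-|\xi|^2 t}\widehat{(\cdot)}_0\|_{L^2_t L^1_\xi}$), and the Duhamel piece is controlled by $A_{1,T}\cdot A_{2,T}$ via Young's convolution inequality in $\xi$. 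The pressure estimate (\ref{m0-E1.13}) follows at the end from solving the Poisson equation $-\Delta p=\partial_i\partial_j(v_iv_j+\partial_i\psi\,\partial_j\psi)+\partial_2\Delta\psi$ and applying Calder\'on--Zygmund estimates in $H^1$ together with the already-controlled norms. I expect the main technical obstacle to be the precise identification of the algebraic cancellations at the top ($H^2$) level, so that no uncontrolled $\partial_2$-derivative of $\psi$ appears, together with the setup of the cross energy producing the $\partial_1\nabla\psi$-dissipation; once these are in place, the remaining work consists of routine 2D Sobolev product and interpolation inequalities.
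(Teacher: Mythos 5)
Your overall architecture (local existence, an $H^2$ energy estimate with a cross term generating the $\|\partial_1\nabla\psi\|_{H^1}^2$ dissipation, a separate Fourier/Duhamel propagation of $A_{2,T}$, and a continuity argument) matches the paper's. However, there are two genuine gaps at precisely the points where the real work lies. First, your claim that the cubic errors can be closed by ``placing an $L^\infty_x$ norm on one factor'' and invoking $A_{2,T}$ fails for the worst term, which is
\begin{equation*}
\int_0^T\!\!\int_{\mathbb{R}^2}\partial_2 v_2\,(\partial_2^3\psi)^2\,dx\,dt .
\end{equation*}
The norm $A_{2,T}$ only controls $\|\widehat v\|_{L^2_T(L^1_\xi)}$ and $\|\widehat{\partial_1\psi}\|_{L^2_T(L^1_\xi)}$, i.e.\ $v$ and $\partial_1\psi$ in $L^2_T(L^\infty)$; it gives nothing for $\partial_2\psi$ or for $\nabla v$ in $L^1_T(L^\infty)$, and $\partial_2^3\psi$ is available only in $L^\infty_T(L^2)$. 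Since this term contains no $\partial_1$-derivative of $\psi$ at all, no H\"older splitting of the three factors closes with the norms in $A_T$. The paper's resolution --- and the central idea of the whole argument --- is to substitute $v_2=-(\partial_t\psi+v\cdot\nabla\psi)$ from the transport equation (twice), integrate by parts in time to convert the space--time integral into time-slice terms plus higher-order nonlinearities, and control the remainders with the anisotropic interpolation $\|\nabla\psi\|_{L^4_T(L^\infty)}\le C\|\partial_1\nabla\psi\|_{L^2_T(H^1)}^{1/2}\|\nabla\psi\|_{L^\infty_T(H^1)}^{1/2}$ (Lemmas \ref{m0-L3.2}, \ref{m0-L3.5}, \ref{m0-L3.6}). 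This mechanism is entirely absent from your proposal.

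Second, your propagation of $A_{2,T}$ asserts that $\widehat v$ and $\widehat{\partial_1\psi}$ ``satisfy heat-type evolutions''; they do not. The linearized system couples a transport equation for $\psi$ (no dissipation) with the heat equation for $v$, and its propagator has eigenvalues $\lambda_\pm=-\tfrac{|\xi|^2}{2}\pm\tfrac{1}{2}\sqrt{\xi^4-4\xi_1^2}$, which give uniform decay $e^{-|\xi|^2t/2}$ only on the region $|\xi_1|>|\xi|^2$. The paper must therefore split frequency space into $\{|\xi|\ge1\}$, $\{|\xi|<1,\ |\xi_1|\le|\xi|^2\}$ (handled directly by weighted H\"older against $A_{1,T}$, with no semigroup), and $\{|\xi|<1,\ |\xi_1|>|\xi|^2\}$ (where the explicit kernels $M_{ij}$ and a Hardy--Littlewood--Sobolev estimate in time with the $t^{-3/4}$ kernel decay are used). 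Your ``Young's convolution inequality in $\xi$'' does not substitute for this: the time integral of $(t-s)^{-3/4}$ against an $L^{4/3}_T$ source is a fractional integration estimate, and the low-frequency region with $|\xi_1|\le|\xi|^2$ needs a separate argument because there the decay degenerates. Until these two steps are supplied, the inequality $A_T\le C(A_0+A_T^2)$ you intend to bootstrap from is not established.
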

\begin{rem}
  One can see that if $\nabla\psi_0,v_0\in \dot{B}^{0}_{2,1}$, then (\ref{m0-E1.3-09}) holds. Then $A_{2,0}$ can be replaced by
    $A_{2,0}=\|\nabla\psi_0\|_{\dot{B}^{0}_{2,1}}+\|v_0\|_{\dot{B}^{0}_{2,1}}$.
\end{rem}
\begin{rem}
Under the assumptions of Theorem \ref{m0-Thm1.1}, if $(\nabla \psi_0,v_0)\in H^n(\mathbb{R}^2)\times H^n(\mathbb{R}^2)$, $n\geq3$, then we can easily obtain that $(\psi,v,\nabla p)\in E^n$
    and omit the details.
\end{rem}

\begin{rem}\label{m0-Rem1.2}
In \cite{Lin12}, using the anisotropic Littlewood-Paley analysis,  H.F. Lin and P. Zhang proved a global wellposedness result of
a 3D toy model. In \cite{Lin14}, we provided a new and simple proof. The proof of this paper is similar to that in  \cite{Lin14}.
The three key technical points in our proof are:
\begin{description}
  \item[(1)] interpolating estimates, like Lemma  \ref{m0-L3.2};
  \item[(2)] using algebraic structure: $\mathrm{div} v =0$ to inter-changing the derivative estimates for  $\partial_1$ and
$\partial_2$;
  \item[(3)]  using the first equation of (\ref{m0-E1.5-N2}) to reduce
"$L^1$ in time estimates" (\cite{Lin12-2}) which is the key to the global
existence result to "energy estimates and
$L^\infty$ estimates for time slices" that are relatively easy to obtain.
\end{description}
 In fact, the basic strategy for the proofs is rather clear. Using the
basic energy laws, one reduces the problems to estimating certain nonlinear
terms of particular forms.
For example,   one of the difficulties of the proofs would be to control
the following type term,
    \begin{equation}
      \int^T_0\int_{\mathbb{R}^2} \partial_2 v_2  (\partial_2^3\psi)^2 dxdt.
    \end{equation}
Since the first derivative of $\psi$, $\partial_1\psi$, decay faster than $\partial_2\psi$ (by energy laws),
in \cite{Lin12-2}, H.F. Lin, L. Xu and P. Zhang explored
such anisotropic behavior by using the
anisotropic Littlewood-Paley theory to conclude the key estimate that
 $v_2\in L^1(\mathbb{R}^+;Lip(\mathbb{R}^2))$.
In this paper, we will show that   $\nabla \psi\in L^4_T(L^{ \infty})$ in Lemma \ref{m0-L3.2} by the interpolating estimate. Then, we use (\ref{m0-E1.5-N2})$_1$ twice  to obtain that
    \begin{eqnarray}
     &&\left| \int^T_0\int_{\mathbb{R}^2} \partial_2 v_2 (\partial_2^3\psi)^2 dxdt\right|\nonumber\\
        &=&\left|\int^T_0\int_{\mathbb{R}^2}
         \partial_2(\partial_t\psi+v\cdot\nabla\psi  )(\partial_2^3\psi)^2 dxdt\right|\nonumber\\
        &\leq&\left|\int_{\mathbb{R}^2}
         \partial_2 \psi(\partial_2^3\psi)^2 dx\big|^T_0\right|+\left|\int^T_0\int_{\mathbb{R}^3}
        \partial_2 v_2\partial_2\psi(\partial_2^3\psi)^2 dxdt\right|+\ldots\nonumber\\
           &=&\ldots+\left|\int^T_0\int_{\mathbb{R}^2}
        \partial_2 (\partial_t\psi+v\cdot\nabla\psi ) \partial_2\psi(\partial_2^3\psi)^2 dxdt\right|+\ldots\nonumber\\
        &\leq&\ldots+ C\|\nabla \psi\|_{L^4_T(L^\infty(\mathbb{R}^2))}^2\|\nabla v\|_{L^2_T(L^2(\mathbb{R}^2))}\|\nabla \psi\|_{L^\infty_T(H^2(\mathbb{R}^2))}^2+\ldots,
    \end{eqnarray}
Refer the details in  Lemma \ref{m0-L3.6}.
This is a simple method for the issue concerning the anisotropic dissipative system similar to (\ref{m0-E1.5-N2}).
\end{rem}

\begin{rem}\label{m0-Rem1.3}
In Lemma  \ref{m0-L3.6},  we want to estimate the following term,
   \begin{eqnarray}
 \left|\int^T_0\int_{\mathbb{R}^2}
        \partial_2 v_1\partial_1\psi(\partial_2^3\psi)^2 dxdt\right|. \nonumber
    \end{eqnarray}
 Since $\partial_2^3\psi\in L^\infty_T(L^2)$ and
    $$
    \left|\int^T_0\int_{\mathbb{R}^2}
        \partial_2 v_1\partial_1\psi(\partial_2^3\psi)^2 dxdt\right|\leq \|\nabla v\|_{L^2_T(L^\infty(\mathbb{R}^2))}
        \|\partial_1 \psi\|_{L^2_T(L^\infty(\mathbb{R}^2))}\|\partial_2^3\psi\|_{L^\infty_T(L^2(\mathbb{R}^2))}^2,
    $$
     we need to estimate the term $\|\partial_1 \psi\|_{L^2_T(L^\infty(\mathbb{R}^2))}$. One cannot bound it if only have
     $\nabla \partial_1\psi\in L^2_T(H^1(\mathbb{R}^2))$. So we add the assumption (\ref{m0-E1.3-09}) on the initial data, and will get
        $$
       \|\partial_1 \psi\|_{L^2_T(L^\infty)}\leq C\|\widehat{\partial_1 \psi}(t, \xi)\|_{L^2_T(L^1_\xi)}\leq CA_{2,0}+CA_{1,T}+CA_T^2.
        $$
        Please see the details in Section \ref{m0-S4}.
\end{rem}

\begin{rem}
  Recently, using the anisotropic Littlewood-Paley analysis techniques,  J.H. Wu, Z.Y. Xiang and Z.F. Zhang \cite{Wu} obtain the global existence and decay estimates of global
strong solution when $\|(\nabla\psi_0,v_0)\|_{H^8\cap H^{-s,-s}\cap H^{-s,8}}\ll 1$, $s=\frac{1}{2}-\epsilon$. Under the special assumptions on the initial data, by the anisotropic Littlewood-Paley analysis techniques,  X.P. Hu and F.H. Lin \cite{Hu} also obtain the   existence of global
strong solution when the initial data are in the critical space.
\end{rem}

Let us complete this section by the notation we shall use in this paper.

\textbf{Notation.}   We shall denote by $(a|b)$ the $L^2$ inner product of $a$ and $b$,  $(a|b)_{\dot{H}^s}=\sum_{|\alpha|=s}(\partial^\alpha a|\partial^\alpha b)$, and $(a|b)_{H^s}=\sum_{k=0}^s(a|b)_{\dot{H}^k}$,   $C_T(X)=C([0,T];X)$ and $L^p_T(X)=L^p([0,T];X)$.

\section{A priori estimates for $A_{1,T}$}\label{m0-S3}
In this section, we prove some \textit{a priori estimates} for $A_{1,T}$ which are crucial for the global existence of the strong solutions for the 2D MHD system (\ref{m0-E1.5-N2}). We begin with the following
Gagliardo-Nirenberg-Sobolev type estimate, see \cite{Nirenberg}.

\begin{lem}\label{m0-L3.2}
  If the function $\psi$ satisfies that $\nabla \psi\in L^\infty_T(H^2(\mathbb{R}^2))$ and $\partial_1\nabla \psi\in L^2_T(H^1(\mathbb{R}^2))$,  then there holds
  \begin{eqnarray}
      \|\nabla \psi\|_{L^4_T(L^\infty(\mathbb{R}^2))}   &\leq& C      \| \partial_1\nabla \psi\|_{L^2_T(H^1(\mathbb{R}^2))}^\frac{1}{2}
        \|   \nabla \psi\|_{L^\infty_T(H^1(\mathbb{R}^2))}^\frac{1}{2},\label{3D-E3.1}
    \end{eqnarray}
    where   $C$ is a positive constant  independent of $T$.
\end{lem}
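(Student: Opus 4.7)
The plan is to prove the desired anisotropic interpolation inequality by a two-step process: first establish a pointwise-in-time anisotropic Sobolev embedding in $\mathbb{R}^2$, then integrate in time while placing the $\partial_1$-derivatives (which are the only ones controlled in $L^2_T$) into the time-$L^2$ factor and the remaining quantities into the time-$L^\infty$ factor.

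First I would prove the 2D anisotropic pointwise inequality
\begin{equation*}
  \|f\|_{L^\infty(\mathbb{R}^2)} \leq C\,\|f\|_{L^2}^{1/4}\|\partial_1 f\|_{L^2}^{1/4}\|\partial_2 f\|_{L^2}^{1/4}\|\partial_1\partial_2 f\|_{L^2}^{1/4}
\end{equation*}
by iterating the 1D Sobolev embedding $\|g\|_{L^\infty(\mathbb{R})}\leq \sqrt{2}\,\|g\|_{L^2(\mathbb{R})}^{1/2}\|g'\|_{L^2(\mathbb{R})}^{1/2}$, which follows from $g^2(y)=2\int_{-\infty}^{y}gg'$. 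Applying this first in the $x_1$ variable for each fixed $x_2$ gives $|f(x_1,x_2)|\leq \sqrt{2}\|f(\cdot,x_2)\|_{L^2_{x_1}}^{1/2}\|\partial_1 f(\cdot,x_2)\|_{L^2_{x_1}}^{1/2}$, and then the same 1D identity applied to $x_2\mapsto\|f(\cdot,x_2)\|_{L^2_{x_1}}^2$ yields $\sup_{x_2}\|f(\cdot,x_2)\|_{L^2_{x_1}}\leq \sqrt{2}\,\|f\|_{L^2}^{1/2}\|\partial_2 f\|_{L^2}^{1/2}$, and similarly for $\partial_1 f$. Combining the two bounds produces the displayed anisotropic estimate.

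Next I would apply this estimate to $f=\nabla\psi$, raise to the fourth power, and integrate over $[0,T]$:
\begin{equation*}
\int_0^T\|\nabla\psi\|_{L^\infty}^4\,dt \leq C\int_0^T\|\nabla\psi\|_{L^2}\|\partial_2\nabla\psi\|_{L^2}\,\|\partial_1\nabla\psi\|_{L^2}\|\partial_1\partial_2\nabla\psi\|_{L^2}\,dt.
\end{equation*}
The two factors $\|\nabla\psi\|_{L^2}$ and $\|\partial_2\nabla\psi\|_{L^2}$ (which I cannot control in $L^2_T$ since only horizontal derivatives are dissipated) I would pull out in $L^\infty_T$, dominating them by $\|\nabla\psi\|_{L^\infty_T H^1}^2$. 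The remaining two factors both carry a $\partial_1$, so by Cauchy-Schwarz in time their integral is bounded by $\|\partial_1\nabla\psi\|_{L^2_T L^2}\|\partial_1\partial_2\nabla\psi\|_{L^2_T L^2}\leq \|\partial_1\nabla\psi\|_{L^2_T H^1}^2$. Taking fourth roots yields (\ref{3D-E3.1}).

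This is really a clean one-shot computation rather than a proof with a genuine obstacle; the only thing one has to get right is the allocation of time integrability, namely that exactly the two $\partial_1$-containing factors must be sent to the $L^2_T$ side so that the resulting bound only requires $\partial_1\nabla\psi\in L^2_T(H^1)$, matching the hypothesis of the lemma and the anisotropic dissipative structure of (\ref{m0-E1.5-N2}).
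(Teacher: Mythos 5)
Your proof is correct and follows essentially the same route as the paper's: both rest on the iterated one-dimensional Agmon/Sobolev embedding in $x_1$ and $x_2$ together with H\"older in time, placing exactly the two $\partial_1$-bearing factors in $L^2_T$. The only difference is organizational — you derive the pointwise-in-time anisotropic $L^\infty$ bound first and integrate afterwards, while the paper interleaves the time integration with the spatial steps via Minkowski's inequality — and the resulting estimate is identical.
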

\begin{proof}
Using Sobolev embedding Theorem and Minkowski's inequality, we obtain
   \begin{eqnarray}
      \|\nabla \psi\|_{L^4_T(L^\infty(\mathbb{R}^2))}&\leq& C
      \left\|
      \|\nabla  \psi\|_{L^2_{x_1}}^\frac{1}{2}
            \|\nabla  \partial_1  \psi\|_{L^2_{x_1}}^\frac{1}{2}
      \right\|_{L^4_T(L^\infty_{x_2})}\nonumber\\
      &\leq&C
      \|\nabla  \psi\|_{L^\infty_T(L^\infty_{x_2}(L^2_{x_1}))}^\frac{1}{2}
            \|\nabla  \partial_1  \psi\|_{L^2_T(L^\infty_{x_2}(L^2_{x_1}))}^\frac{1}{2}\nonumber\\
      &\leq&C
      \|\nabla  \psi\|_{L^\infty_T(L^2_{x_1}(L^\infty_{x_2}))}^\frac{1}{2}
            \|\nabla  \partial_1  \psi\|_{L^2_T(L^2_{x_1}(L^\infty_{x_2}))}^\frac{1}{2}\nonumber\\
      &\leq&C
      \left\|
      \|\nabla  \psi\|_{L^2_{x_2}}^\frac{1}{2}
            \|\nabla  \partial_2\psi\|_{L^2_{x_2}}^\frac{1}{2}
      \right\|_{L^\infty_T(L^2_{x_1})}^\frac{1}{2}
        \left\|
      \|\nabla\partial_1  \psi\|_{L^2_{x_2}}^\frac{1}{2}
            \|\nabla  \partial_1\partial_2\psi\|_{L^2_{x_2}}^\frac{1}{2}
      \right\|_{L^2_T(L^2_{x_1})}^\frac{1}{2}\nonumber\\
      &\leq&C
      \|\nabla  \psi\|_{L^\infty_T(L^2(\mathbb{R}^2))}^\frac{1}{4}
            \|\nabla  \partial_2\psi\|_{L^\infty_T(L^2(\mathbb{R}^2))}^\frac{1}{4}
             \|\nabla \partial_1 \psi\|_{L^2_T(L^2(\mathbb{R}^2))}^\frac{1}{4}
            \|\nabla  \partial_1\partial_2\psi\|_{L^2_T(L^2(\mathbb{R}^2))}^\frac{1}{4}\nonumber\\
            &\leq& C      \| \partial_1\nabla \psi\|_{L^2_T(H^1(\mathbb{R}^2))}^\frac{1}{2}
        \|   \nabla \psi\|_{L^\infty_T(H^1(\mathbb{R}^2))}^\frac{1}{2}.
    \end{eqnarray}
\end{proof}

Similar to \cite{Lin12-2}, by taking divergence of the $v$ equation of (\ref{m0-E1.5-N2}), we can express the pressure function $p$ via
    \begin{equation}
      p=-2 \partial_2 \psi +\sum_{i,j=1}^2(-\Delta)^{-1}
      [\partial_iv_j\partial_j v_i+\partial_i\partial_j(\partial_i \psi\partial_j \psi)
      ].\label{m0-E3.14}
    \end{equation}
Substituting (\ref{m0-E3.14}) into (\ref{m0-E1.5-N2}), we have
    \begin{equation}
  \left\{
  \begin{array}{l}
    \partial_t \psi +v\cdot\nabla \psi + v_2=0,\ \ (t,x)\in\mathbb{R}^+\times\mathbb{R}^2,\\
        \partial_t v_1+v\cdot\nabla v_1 -\Delta v_1 - \partial_1\partial_2\psi =f_1,\\
        \partial_t v_2+v\cdot\nabla v_2 -\Delta v_2 + \partial_1^2\psi =f_2,\\
            \mathrm{div}v=0,\\
            (\psi,v)|_{t=0}=(\psi_0,v_0),
  \end{array}
  \right.\label{m0-E3.15}
\end{equation}
where
    $$
    f=(f_1,f_2)^\top=-\displaystyle{\sum_{i,j=1}^2}\nabla(-\Delta)^{-1}
      [\partial_iv_j\partial_j v_i+\partial_i\partial_j(\partial_i \psi\partial_j \psi)
      ]-\displaystyle{\sum_{j=1}^2}\partial_j[\nabla \psi\partial_j \psi].
    $$

The next lemma is a standard  energy estimate.
\begin{lem}\label{m0-L2.6}
 Let   $(\psi,v)$ be sufficiently smooth functions which solve (\ref{m0-E1.5-N2}),
  then there holds
    \begin{eqnarray}\label{m0-E2.5-00}
   &&\frac{d}{dt}\left\{
   \frac{1}{2}\left(
   \|v\|_{H^2}^2+\|\nabla \psi\|_{H^2}^2+\frac{1}{4 }\|\Delta\psi\|_{H^1}^2
   \right)+\frac{1}{4}(v_2|\Delta\psi)_{H^1}
   \right\}       \\
        &&+\|\nabla v \|_{H^2}^2-\frac{1 }{4}\|\nabla v_2\|_{H^1}^2
        +\frac{1 }{4}\|\nabla\partial   _1\psi\|_{H^1}^2\nonumber\\
   &=&-(v\cdot\nabla v|v)_{H^2}
   +(v\cdot\nabla\psi|\Delta\psi)_{H^2}
   -(\mathrm{div}(\nabla\psi\otimes\nabla\psi)|v)_{H^2}
   -\frac{1}{4}(v\cdot\nabla v_2|\Delta\psi)_{H^1}
   \nonumber\\
        &&
   +\frac{1}{4}(f_2|\Delta\psi)_{H^1}
   +\frac{1}{4}(\nabla v_2|\nabla (v\cdot\nabla \psi))_{H^1}
   -\frac{1}{4 }(\Delta(v\cdot\nabla\psi)|\Delta\psi)_{H^1} .
   \nonumber
    \end{eqnarray}
\end{lem}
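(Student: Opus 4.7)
The plan is to derive the identity by combining two energy computations: a standard $H^{2}$ estimate for $(v,\nabla\psi)$ producing the full dissipation $\|\nabla v\|_{H^{2}}^{2}$, and an auxiliary cross-term computation that extracts the missing $\|\nabla\partial_{1}\psi\|_{H^{1}}^{2}$ dissipation from the linear coupling $+\partial_{1}^{2}\psi$ in the $v_{2}$-equation of (\ref{m0-E3.15}). For the first piece I apply $\partial^{\alpha}$ with $|\alpha|\le 2$ to the velocity equations of (\ref{m0-E1.5-N2}), pair with $\partial^{\alpha}v$ and sum, and similarly apply $\nabla\partial^{\alpha}$ to the $\psi$-equation and pair with $\nabla\partial^{\alpha}\psi$. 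The pressure term drops out against $\mathrm{div}\,v=0$, and the key algebraic check is that the linear-in-$\psi$ coupling $(\partial_{1}\partial_{2}\psi|v_{1})_{H^{2}}+((\Delta+\partial_{2}^{2})\psi|v_{2})_{H^{2}}+(\nabla v_{2}|\nabla\psi)_{H^{2}}$ vanishes: writing $(\Delta+\partial_{2}^{2})\psi=\partial_{1}^{2}\psi+2\partial_{2}^{2}\psi$, integrating by parts in $x_{1}$ and $x_{2}$, and invoking $\partial_{1}v_{1}+\partial_{2}v_{2}=0$, the first two terms collapse to $-(\nabla\psi|\nabla v_{2})_{H^{2}}$ and cancel the third. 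This recovers $\frac{d}{dt}\tfrac{1}{2}(\|v\|_{H^{2}}^{2}+\|\nabla\psi\|_{H^{2}}^{2})+\|\nabla v\|_{H^{2}}^{2}$ on the left and the first three nonlinear terms on the right-hand side of (\ref{m0-E2.5-00}).

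For the second piece I switch to the pressure-free form (\ref{m0-E3.15}) and expand $\frac{d}{dt}(v_{2}|\Delta\psi)_{H^{1}}=(\partial_{t}v_{2}|\Delta\psi)_{H^{1}}+(v_{2}|\Delta\partial_{t}\psi)_{H^{1}}$ by direct substitution. Two identities drive the estimate: $(\partial_{1}^{2}\psi|\Delta\psi)_{H^{1}}=\|\partial_{1}\nabla\psi\|_{H^{1}}^{2}$, obtained from two integrations by parts on the cross piece $\int\partial_{1}^{2}\psi\,\partial_{2}^{2}\psi\,dx$ at both the $L^{2}$ and $\dot H^{1}$ level, and $(v_{2}|\Delta v_{2})_{H^{1}}=-\|\nabla v_{2}\|_{H^{1}}^{2}$. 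Together they deliver the $\|\nabla\partial_{1}\psi\|_{H^{1}}^{2}$ dissipation, at the price of a harmless $\|\nabla v_{2}\|_{H^{1}}^{2}$ contribution and one indefinite residual $(\Delta v_{2}|\Delta\psi)_{H^{1}}$ coming from the viscosity. I kill the residual by adjoining $\frac{d}{dt}\tfrac{1}{8}\|\Delta\psi\|_{H^{1}}^{2}=-\tfrac{1}{4}(\Delta\psi|\Delta(v\cdot\nabla\psi))_{H^{1}}-\tfrac{1}{4}(\Delta v_{2}|\Delta\psi)_{H^{1}}$, so that after weighting the cross-term identity by $1/4$ the unwanted $(\Delta v_{2}|\Delta\psi)_{H^{1}}$ contributions cancel. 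A final integration by parts $-(v_{2}|\Delta(v\cdot\nabla\psi))_{H^{1}}=(\nabla v_{2}|\nabla(v\cdot\nabla\psi))_{H^{1}}$ rearranges the remaining pieces into the four cross-term contributions displayed on the right-hand side of (\ref{m0-E2.5-00}).

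The main obstacle, and essentially the only non-routine ingredient, is spotting the two weights $1/4$ and $1/8$: the cross term $\frac{1}{4}(v_{2}|\Delta\psi)_{H^{1}}$ is what liberates $\|\nabla\partial_{1}\psi\|_{H^{1}}^{2}$ through the coupling $-(\partial_{1}^{2}\psi|\Delta\psi)_{H^{1}}$, while $\frac{1}{8}\|\Delta\psi\|_{H^{1}}^{2}$ is precisely the corrector required to annihilate the indefinite viscous contribution generated in the previous step. Apart from this structural choice, the entire proof reduces to integration by parts, the divergence-free constraint, and the $H^{s}$-inner-product definition.
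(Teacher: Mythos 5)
Your proposal is correct and follows essentially the same route as the paper: the first piece reproduces the paper's identity (2.7) (the $H^2$ energy law with the linear couplings cancelling via $\mathrm{div}\,v=0$ and the $\psi$-equation), and your cross-term computation with the $\tfrac18\|\Delta\psi\|_{H^1}^2$ corrector is algebraically identical to the paper's step of substituting $v_2=-(\partial_t\psi+v\cdot\nabla\psi)$ into $-(\Delta v_2|\Delta\psi)_{H^1}$ to produce $\tfrac12\frac{d}{dt}\|\Delta\psi\|_{H^1}^2+(\Delta(v\cdot\nabla\psi)|\Delta\psi)_{H^1}$, before weighting the whole identity by $1/4$. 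All the key identities you invoke ($(\partial_1^2\psi|\Delta\psi)_{H^1}=\|\partial_1\nabla\psi\|_{H^1}^2$, the cancellation of the linear couplings, and the cancellation of $(\Delta v_2|\Delta\psi)_{H^1}$) check out.
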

\begin{proof}
Taking the standard $H^2$ inner product of (\ref{m0-E1.5-N2})$_{2,3}$ and $v$, using the integration by parts, we have
    \begin{eqnarray}
      &&\frac{1}{2}\frac{d}{dt}\|v\|_{H^2}^2
      +(v\cdot\nabla v|v)_{H^2}
            +\|\nabla v\|_{H^2}^2\nonumber\\
            &=&-( \partial_1\partial_2\psi|v_1)_{H^2}
            -( (\Delta+\partial_2^2)\psi|v_2)_{H^2}
            -(\mathrm{div}(\nabla\psi\otimes\nabla \psi)|v)_{H^2}.\label{m0-E2.11}
    \end{eqnarray}
Using the fact that $\mathrm{div}v=0$ and the integration by parts, we get
        \begin{eqnarray}
        & &-( \partial_1\partial_2\psi|v_1)_{H^2}
            = ( \partial_2 \psi|\partial_1 v_1)_{H^2}\nonumber\\
    & =& -(  \partial_2 \psi|\partial_2v_2)_{H^2}
          =  (  \partial_2^2 \psi| v_2)_{H^2}.
    \end{eqnarray}
From (\ref{m0-E1.5-N2})$_1$, we obtain
        \begin{eqnarray}
        & &-( \Delta\psi|v_2)_{H^2}
            = (  \Delta \psi|(\partial_t\psi+v\cdot\nabla \psi ))_{H^2}\nonumber\\
    & =&-\frac{1}{2}\frac{d}{dt}\|\nabla\psi\|_{H^2}^2+
    (  \Delta \psi| v\cdot\nabla \psi )_{H^2} .\label{m0-E2.13}
    \end{eqnarray}
From (\ref{m0-E2.11})-(\ref{m0-E2.13}), we have
    \begin{eqnarray}
      &&\frac{1}{2}\frac{d}{dt}\left(\|v\|_{H^2}^2
      +\|\nabla\psi\|_{H^2}^2\right)
      +\|\nabla v\|_{H^2}^2\nonumber\\
            &=& -(v\cdot\nabla v|v)_{H^2}
         +
    (  \Delta \psi| v\cdot\nabla \psi )_{H^2}
            -(\mathrm{div}(\nabla\psi\otimes\nabla \psi)|v)_{H^2}.\label{m0-E2.14}
    \end{eqnarray}
Taking the standard $H^1$ inner product of (\ref{m0-E3.15})$_{3}$ and $\Delta\psi$,
 using the integration by parts, we have
    \begin{eqnarray}
      (\partial_tv_2|\Delta\psi)_{H^1}+(v\cdot\nabla v_2|\Delta\psi)_{H^1}
      -(\Delta v_2|\Delta\psi)_{H^1}
     &=&  - \|\partial_1\nabla \psi\|_{H^1}^2
     +(f_2|\Delta\psi)_{H^1} .\label{m0-E2.15}
    \end{eqnarray}
From (\ref{m0-E1.5-N2})$_1$, using the integration by parts, we get
    \begin{eqnarray}
      &&(\partial_tv_2|\Delta\psi)_{H^1}\nonumber\\
            &=&\frac{d}{dt}(v_2|\Delta\psi)_{H^1}
            -(v_2|\Delta\partial_t\psi)_{H^1}\nonumber\\
      &=&\frac{d}{dt}(v_2|\Delta\psi)_{H^1}
           +(v_2|\Delta(v\cdot\nabla\psi + v_2))_{H^1}\nonumber\\
      &=&\frac{d}{dt}(v_2|\Delta\psi)_{H^1}
           -(\nabla v_2|\nabla(v\cdot\nabla\psi))_{H^1}
           -\|\nabla v_2\|_{H^1}^2.
    \end{eqnarray}
We observe, by (\ref{m0-E1.5-N2})$_1$,   that
    \begin{eqnarray}
      &&-(\Delta v_2|\Delta\psi)_{H^1}\nonumber\\
            &=&(\Delta(\partial_t\psi+v\cdot\nabla\psi )
            |\Delta\psi)_{H^1}\nonumber\\
      &=&\frac{1}{2 }\frac{d}{dt}\|\Delta\psi\|_{H^1}^2
      +(\Delta( v\cdot\nabla\psi )
            |\Delta\psi)_{H^1}.\label{m0-E2.17}
    \end{eqnarray}
From (\ref{m0-E2.15})-(\ref{m0-E2.17}), we have
    \begin{eqnarray}
      &&\frac{d}{dt}\left\{
      \frac{1}{2}\|\Delta\psi\|_{H^1}^2+(v_2|\Delta\psi)_{H^1}
      \right\}+ \|\partial_1\nabla\psi\|_{H^1}^2- \|\nabla v_2\|_{H^1}^2\nonumber\\
            &=&-(v\cdot\nabla v_2|\Delta\psi)_{H^1}
                        +(f_2|\Delta\psi)_{H^1}+(\nabla v_2|\nabla(v\cdot\nabla \psi))_{H^1}
            -(\Delta(v\cdot\nabla\psi)|\Delta\psi) .\label{m0-E2.18}
    \end{eqnarray}
With (\ref{m0-E2.14}) and (\ref{m0-E2.18}), one can finish the proof.
\end{proof}

The following is the key \textit{a priori} estimate which is essential to the proof
of the main result of this paper.
 \begin{lem}\label{m0-E3.3-00}
 Let   $(\psi,v)$ be sufficiently smooth functions which solve (\ref{m0-E1.5-N2}) and  satisfy
  $(  \psi, v,\nabla p)\in E^2_T$,   then there holds
    \begin{equation}
   A_{1,T}^2    \leq C
               (\|v_0\|_{H^2(\mathbb{R}^2)}^2+\|\nabla\psi_0\|_{H^2(\mathbb{R}^2)}^2)
               + C  A_T^3(1+ A_T)^2.\label{m0-E3.17}
    \end{equation}
    where   $C$ is a positive constant  independent of $T$.
\end{lem}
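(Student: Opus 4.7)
The plan is to integrate the identity (\ref{m0-E2.5-00}) of Lemma \ref{m0-L2.6} in time over $[0,T]$, reduce the left-hand side to a lower bound of the form $A_{1,T}^2$ plus data, and then bound each of the seven nonlinear right-hand terms by $CA_T^3(1+A_T)^2$. On the left, the negative term $-\tfrac{1}{4}\|\nabla v_2\|_{H^1}^2$ is absorbed into $\|\nabla v\|_{H^2}^2$ (leaving at least $\tfrac{3}{4}\|\nabla v\|_{H^2}^2$), and the cross term $\tfrac{1}{4}(v_2|\Delta\psi)_{H^1}$ at $t=T$ is handled by Cauchy--Schwarz, $|(v_2|\Delta\psi)_{H^1}|\le\tfrac{1}{2}\|v\|_{H^1}^2+\tfrac{1}{2}\|\nabla\psi\|_{H^2}^2$, and absorbed into the leading quadratic $\tfrac{1}{2}(\|v\|_{H^2}^2+\|\nabla\psi\|_{H^2}^2)$. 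The integrated left-hand side then dominates $A_{1,T}^2$ modulo $\|v_0\|_{H^2}^2+\|\nabla\psi_0\|_{H^2}^2$.

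For the right-hand side I use the following toolbox of time--space norms, all bounded by $A_T$ (up to constants): $\|v\|_{L^\infty_T H^2},\|\nabla\psi\|_{L^\infty_T H^2},\|\nabla v\|_{L^2_T H^2},\|\partial_1\nabla\psi\|_{L^2_T H^1}\le A_{1,T}$; $\|v\|_{L^2_T L^\infty},\|\partial_1\psi\|_{L^2_T L^\infty}\le C A_{2,T}$ via the Fourier bound $\|g\|_{L^\infty}\le C\|\widehat{g}\|_{L^1_\xi}$; $\|\nabla\psi\|_{L^4_T L^\infty}\le C A_{1,T}$ from Lemma \ref{m0-L3.2}; and $\|\nabla v\|_{L^2_T L^\infty}\le C\|\nabla v\|_{L^2_T H^2}\le C A_{1,T}$ by the two-dimensional Sobolev embedding $H^2\hookrightarrow L^\infty$. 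Combined with the algebra property of $H^2(\mathbb{R}^2)$, the Calder\'on--Zygmund boundedness of $\nabla(-\Delta)^{-1}\partial_i\partial_j$, integration by parts using $\mathrm{div}\,v=0$ to cancel top-order transport pieces, and judicious H\"older distribution in $t$, the terms $(v\cdot\nabla v|v)_{H^2}$, $(\mathrm{div}(\nabla\psi\otimes\nabla\psi)|v)_{H^2}$, $(v\cdot\nabla v_2|\Delta\psi)_{H^1}$, $(\nabla v_2|\nabla(v\cdot\nabla\psi))_{H^1}$, and the pressure term $(f_2|\Delta\psi)_{H^1}$ are each bounded by $CA_T^3(1+A_T)^2$.

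The genuinely delicate term is $(v\cdot\nabla\psi|\Delta\psi)_{H^2}-\tfrac{1}{4}(\Delta(v\cdot\nabla\psi)|\Delta\psi)_{H^1}$. Expanding the $\dot{H}^2$ pairing produces sub-terms of the schematic form $\int_0^T\!\!\int_{\mathbb{R}^2}\partial_2 v_j\,(\partial_2^k\psi)^2\,dx\,dt$ with $j\in\{1,2\}$ and $k\in\{2,3\}$, for which neither the $\partial_1$-dissipation nor the $\widehat{v}$-type $L^\infty$ control applies directly. For $j=2$, following Remark \ref{m0-Rem1.2}, I use $(\ref{m0-E1.5-N2})_1$ to substitute $\partial_2 v_2=-\partial_t\partial_2\psi-\partial_2(v\cdot\nabla\psi)$; the $\partial_t\partial_2\psi$ piece is integrated by parts in $t$, producing a boundary contribution $\big[\int\partial_2\psi(\partial_2^3\psi)^2\,dx\big]_0^T\lesssim\|\nabla\psi\|_{L^\infty_T H^2}^3\le A_{1,T}^3$ plus a residual in which $\partial_t$ has moved onto $(\partial_2^3\psi)^2$; re-applying $(\ref{m0-E1.5-N2})_1$ a second time reduces the residual to quartic/quintic products controlled by $\|\nabla\psi\|_{L^4_T L^\infty}^2\|\nabla v\|_{L^2_T H^2}\|\nabla\psi\|_{L^\infty_T H^2}^2$, which is the source of the $(1+A_T)^2$ factor in the target. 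For $j=1$, Remark \ref{m0-Rem1.3} applies: $\|\partial_1\psi\|_{L^2_T L^\infty}\le CA_{2,T}$ allows closure without ever needing $\|\partial_2\psi\|_{L^2_T L^\infty}$. The main obstacle is precisely this anisotropic bookkeeping: identifying the critical sub-terms, choosing between the two equation-substitution tricks of Remarks \ref{m0-Rem1.2} and \ref{m0-Rem1.3}, and applying each once or twice in the right place so that no estimate acquires $T$-dependence. Once this is complete, summing every contribution and applying Young's inequality yields (\ref{m0-E3.17}).
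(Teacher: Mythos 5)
Your proposal follows essentially the same route as the paper: integrate the energy identity of Lemma \ref{m0-L2.6}, absorb the sign-indefinite left-hand terms, and bound the seven nonlinear terms $H_1$--$H_7$ using the $L^4_T(L^\infty)$ interpolation of Lemma \ref{m0-L3.2}, the div-free structure to trade $\partial_2 v_2$ for $\partial_1 v_1$, and the (twice-applied) substitution $v_2=-(\partial_t\psi+v\cdot\nabla\psi)$ for the anisotropic terms $\int_0^T\!\!\int \partial_2 v_2(\partial_2^3\psi)^2\,dx\,dt$, with $\|\partial_1\psi\|_{L^2_T(L^\infty)}\le C\,A_{2,T}$ closing the $\partial_2 v_1\,\partial_1\psi$ contributions. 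This is precisely the argument the paper carries out in Lemmas \ref{m0-L3.5}, \ref{m0-L3.6} and \ref{m0-L3.4}.
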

\begin{proof}
From  the energy estimate (\ref{m0-E2.5-00}) and the definition of $A_{1,T}$,  we obtain for a positive constant $C$ (independent of $T$) that
 \begin{eqnarray}
      A_{1,T}^2&\leq&CA_{1,0}^2+C\left|\int^T_0(v\cdot\nabla v|v)_{H^2}dt\right|
   +C\left|\int^T_0(v\cdot\nabla\psi|\Delta\psi)_{\dot{H}^1}+(v\cdot\nabla\psi|\Delta\psi)_{\dot{H}^2}dt\right|
   \nonumber\\
        &&
   +C\left|\int^T_0(\mathrm{div}(\nabla\psi\otimes\nabla\psi)|v)_{\dot{H}^1}+(\mathrm{div}(\nabla\psi\otimes\nabla\psi)|v)_{\dot{H}^2}dt\right| +C\left|\int^T_0(v\cdot\nabla v_2|\Delta\psi)_{H^1}dt\right|\nonumber\\
        &&
    +C\left|\int^T_0(f_2|\Delta\psi)_{H^1}dt\right|
   +C\left|\int^T_0(\nabla v_2|\nabla (v\cdot\nabla \psi))_{H^1}dt\right|
   +C\left|\int^T_0(\Delta(v\cdot\nabla\psi)|\Delta\psi)_{H^1} dt\right|\nonumber\\
   &=&CA_{1,0}^2+\sum_{j=1}^7H_j.\label{m0-E3.18-00}
    \end{eqnarray}

  We are going to estimate term by term the right hand side of the above
inequality. The basic strategies
are the similar to \cite{Lin14}. More precisely, we estimate separately terms involving
 $\partial_1 \psi$ and terms with  $\partial_2 \psi$. For terms with
  $\partial_1 \psi$, one can use the dissipations implied by the energy
equality (\ref{m0-E2.5-00}). For terms containing $\partial_2\psi$, we
use the algebriac relation (deduced from that $\mathrm{div}v=0$) and the
transport equations. The latter reduces space-time estimates to bounds
on time-slices and terms with either $\partial_1 \psi$ or of higher
order nonlinearities (hence they are smaller under our smallness
assumptions on the initial data).
       To illustrate the basic idea, we start with the second term $H_2$.
Applying
H\"{o}lder and Sobolev inequalities, we deduce that
    \begin{eqnarray}
     && H_2   = C\left|\sum_{k=1}^2\sum_{|\alpha|=k}\sum_{i=1}^2\int^T_0\int
            [\partial^\alpha\partial_i(v\cdot\nabla\psi)-v\cdot\nabla\partial^\alpha\partial_i\psi]\partial^\alpha\partial_i\psi dxdt\right|
            \nonumber\\
     &\leq&
      C\|\nabla\partial_1\psi\|_{L^2_T(L^2(\mathbb{R}^2))}(\|\nabla^2 v\|_{L^2_T(L^2(\mathbb{R}^2))}\|\nabla\psi\|_{L^\infty_T(L^\infty(\mathbb{R}^2))}
     +\|\nabla  v\|_{L^2_T(L^4(\mathbb{R}^2))}\|\nabla^2\psi\|_{L^\infty_T(L^4(\mathbb{R}^2))})\nonumber\\
        &&+C\|\partial_1\psi\|_{L^2_T(L^\infty(\mathbb{R}^2))}\|\nabla^2 v_1\|_{L^2_T(L^2(\mathbb{R}^2))}\|\nabla^2\psi\|_{L^\infty_T(L^2(\mathbb{R}^2))}
        +C\left|\int^T_0\int(\partial_2^2v_2\partial_2 \psi+2\partial_2 v_2\partial_2^2\psi)\partial_2^2\psi
         dxdt\right| \nonumber\\
     &&+C\|\partial_1 \nabla^2\psi\|_{L^2_T(L^2(\mathbb{R}^2))}(\|\nabla^3 v\|_{L^2_T(L^2(\mathbb{R}^2))}\|\nabla\psi\|_{L^\infty_T(L^\infty(\mathbb{R}^2))}
            +\|\nabla^2 v\|_{L^2_T(L^4(\mathbb{R}^2))}\|\nabla^2\psi\|_{L^\infty_T(L^4(\mathbb{R}^2))}\nonumber\\
                &&
          +\|\nabla  v\|_{L^2_T(L^\infty(\mathbb{R}^2))}\|\nabla^3 \psi\|_{L^\infty_T(L^2(\mathbb{R}^2))}
     )\nonumber\\
             &&+C\|\nabla^3\psi\|_{L^\infty_T(L^2(\mathbb{R}^2))}
             (\|\nabla^3 v_1\|_{L^2_T(L^2(\mathbb{R}^2))}\|\partial_1\psi\|_{L^2_T(L^\infty(\mathbb{R}^2))}
            +\|\nabla^2 v_1\|_{L^2_T(L^4(\mathbb{R}^2))}\|\nabla\partial_1\psi\|_{L^2_T(L^4(\mathbb{R}^2))}\nonumber\\
                &&
          +\|\nabla  v_1\|_{L^2_T(L^\infty(\mathbb{R}^2))}\|\nabla^2\partial_1 \psi\|_{L^2_T(L^2(\mathbb{R}^2))}
     )\nonumber\\
        &&+C\left|\int^T_0\int \partial_2^3\psi(\partial_2^3v_2\partial_2\psi+
        3\partial_2^2v_2\partial_2^2\psi
        +3\partial_2v_2\partial_2^3\psi)dxdt\right|.   \label{m0-E3.18-0001}
    \end{eqnarray}
    Then, to estimate the  terms in the above inequality, we  need the following two lemmas, and give the detail proofs in the later.
    \begin{lem}\label{m0-L3.5}
 Under the conditions in Lemma \ref{m0-E3.3-00}, then there hold
    \begin{equation}
   \left|\int^T_0\int  \partial_2\psi  \partial_2^3\psi\partial_2^3 v_2  dxdt
    \right|
    +\left|\int^T_0\int  \partial_2^2\psi  \partial_2^3\psi\partial_2^2 v_2  dxdt
    \right|+\left|\int^T_0\int  \partial_2\psi  \partial_2^2\psi\partial_2^2 v_2  dxdt
    \right|
   \leq CA_T^3,\label{m0-E3.10}
    \end{equation}
        \begin{equation}
     \left|\int^T_0\int  \partial_2 \psi  \partial_2^3\psi\partial_2^2 v_2\partial_2^2\psi  dxdt
    \right|
    \leq CA_T^4 ,\label{m0-E3.11}
    \end{equation}
     \begin{equation}
    \left|\int^T_0\int  (\partial_2^2\psi)^2\partial_2  v_2  dxdt
    \right|
    +\left|\int^T_0\int  (\partial_2^2\psi )^2 \partial_2^3  v_2  dxdt
    \right|
     \leq  CA_T^3, \label{m0-E3.12}
    \end{equation}
    where   $C$ is a positive constant  independent of $T$.
\end{lem}
    \begin{lem}\label{m0-L3.6}
 Under the conditions in Lemma \ref{m0-E3.3-00}, then there holds
    \begin{eqnarray}
   \left|\int^T_0\int  \partial_2 v_2(\partial_2^3\psi)^2 dxdt
    \right|
    &\leq&  C  A_T^3(1+A_T^2),
    \end{eqnarray}
    where   $C$ is a positive constant  independent of $T$.
\end{lem}

Accepting these two lemmas, we proceed with our proof.
From (\ref{m0-E3.18-0001}) and Lemmas  \ref{m0-L3.5}-\ref{m0-L3.6},   we get
    \begin{eqnarray}
       H_2   &\leq& C A_T^3(1 +A_T)^2.  \label{m0-E3.18}
    \end{eqnarray}
Similarly, one can get
        \begin{equation}
     H_7 = C\left|  \sum_{i=1}^2
            \int^T_0 (\partial_i(v\cdot\nabla\psi)|
            \partial_i\Delta\psi)_{H^1}dt\right|
            \leq C A_T^3(1+ A_T)^2,\label{m0-E3.19}
    \end{equation}
and
    \begin{eqnarray}
      H_3    &=&C\left|\sum_{k=1}^2\sum_{|\alpha|=k}\sum_{i,j=1}^2\int^T_0\int
            \partial^\alpha(\partial_i\psi\partial_j\psi)\partial^\alpha\partial_i v_j dxdt\right|
            \nonumber\\
      &\leq&  \|\nabla^2 v\|_{L^2_T(L^2(\mathbb{R}^2))}(\|\partial_1\psi\|_{L^2_T(L^\infty(\mathbb{R}^2))}
      \|\nabla^2\psi\|_{L^\infty_T(L^2(\mathbb{R}^2))}
      +\|\nabla\psi\|_{L^\infty_T(L^\infty(\mathbb{R}^2))}\|\nabla\partial_1\psi\|_{L^2_T(L^2(\mathbb{R}^2))})
      \nonumber\\
      &&+C\left|\int^T_0  \partial_2\psi\partial_2^2\psi \partial_2^2 v_2 dxdt\right|\nonumber\\
                &&+C\|\nabla^3v\|_{L^2_T(L^2(\mathbb{R}^2))}
                (\|\nabla^2\psi\|_{L^\infty_T(L^4(\mathbb{R}^2))} \|\nabla\partial_1\psi\|_{L^2_T(L^4(\mathbb{R}^2))}
                +\|\partial_1\psi\|_{L^2_T(L^\infty(\mathbb{R}^2))}\|\nabla^3\psi\|_{L^\infty_T(L^2(\mathbb{R}^2))}
                \nonumber\\
      &&+\|\nabla\psi\|_{L^\infty_T(L^\infty(\mathbb{R}^2))}\|\nabla^2\partial_1\psi\|_{L^2_T(L^2(\mathbb{R}^2))}
                )+C\left|\int^T_0\int\partial_2^3v_2((\partial_2^2\psi)^2+\partial_2\psi\partial_2^3\psi) dxdt\right|
                \nonumber\\
    &\leq& CA_T^3.\label{m0-E3.20}
    \end{eqnarray}
  Apply the same line of arguments, one could deduce that
    \begin{eqnarray}
     &&\left|\sum_{i,j=1}^2\int^T_0
      (\partial_2(-\Delta)^{-1}(\partial_iv_j\partial_j v_i)|\Delta\psi)_{H^1}dt\right|\nonumber\\
            &\leq&C\int^T_0\|(\nabla v)^2\|_{H^1(\mathbb{R}^2)}\|\nabla\psi\|_{H^1(\mathbb{R}^2)} dt\nonumber\\
      &\leq&  C\|\nabla v\|_{L^2_T(H^2(\mathbb{R}^2))}^2\|\nabla\psi\|_{L^\infty_T(H^2(\mathbb{R}^2))}, \label{m0-E3.21}
    \end{eqnarray}
        \begin{eqnarray}
    &&\left|\int^T_0
    \left(\left.-\sum_{i,j=1}^2\partial_2(-\Delta)^{-1}(
    \partial_i\partial_j(\partial_i\psi\partial_j\psi))-
    \sum_{j=1}^2\partial_j(\partial_2\psi\partial_j\psi)\right|\Delta\psi
    \right)_{H^1}
    dt\right|\nonumber\\
        &=&\left|\int^T_0
    \left(\left.- \partial_2(-\Delta)^{-1}(
    \partial_1^2(\partial_1\psi\partial_1\psi))  \right|\Delta\psi
    \right)_{H^1}
    dt\right|\nonumber\\
        &&+\left|\int^T_0
    \left(\left.   2\partial_2(-\Delta)^{-1} (
    \partial_2(\partial_2\psi\partial_1\psi))+
      (\partial_2\psi\partial_1\psi)\right|\partial_1\Delta\psi
    \right)_{H^1}
    dt\right|\nonumber\\
    &&+\left|\int^T_0
     ( -  (-\Delta)^{-1}
    \partial_2^3 (\partial_2\psi)^2  -
     \partial_2(\partial_2\psi )^2 |\Delta\psi
     )_{H^1}dt\right|\nonumber\\
            &\leq& C\int^T_0
            \|\nabla (\partial_1\psi)^2\|_{H^1(\mathbb{R}^2)}\|\Delta\psi\|_{H^1(\mathbb{R}^2)}dt
            +C\int^T_0
            \|\nabla (\nabla \psi\partial_1\psi) \|_{H^1(\mathbb{R}^2)}\|\nabla\partial_1\psi\|_{H^1(\mathbb{R}^2)}dt
            \nonumber\\
    &&+\left|\int^T_0
     (    (-\Delta)^{-1}
    \partial_2\partial_1^2 (\partial_2\psi)^2 |\Delta\psi
     )_{H^1}dt\right|\nonumber\\
            &\leq& C\|\nabla\partial_1 \psi\|_{L^2_T(H^1 (\mathbb{R}^2))}(\|\nabla\partial_1 \psi\|_{L^2_T(H^1 (\mathbb{R}^2))}+\| \partial_1 \psi\|_{L^2_T(L^\infty (\mathbb{R}^2))})\|\nabla\psi\|_{L^\infty_T(H^2(\mathbb{R}^2))}\nonumber\\
            &&
            +C\int^T_0\|\partial_1(\partial_2\psi)^2\|_{H^1(\mathbb{R}^2)}\|\nabla\partial_1\psi\|_{H^1(\mathbb{R}^2)}dt\nonumber\\
            &\leq&  C\|\nabla\partial_1 \psi\|_{L^2_T(H^1 (\mathbb{R}^2))}(\|\nabla\partial_1 \psi\|_{L^2_T(H^1 (\mathbb{R}^2))}+\| \partial_1 \psi\|_{L^2_T(L^\infty (\mathbb{R}^2))})\|\nabla\psi\|_{L^\infty_T(H^2(\mathbb{R}^2))}.
            \label{m0-E3.22}
        \end{eqnarray}
With (\ref{m0-E3.21})-(\ref{m0-E3.22}), we conclude
    \begin{equation}
      H_5=C\left| \int^T_0
       (f_2|\Delta\psi)_{H^1}dt\right|\leq CA_T^3.
             \label{m0-E3.23}
    \end{equation}
In the same way, we can easily obtain the following estimates and omit the details,
    \begin{equation}
  H_1=C\left|  \int^T_0(v\cdot\nabla v|v)_{H^2}dt\right|\leq C\|\nabla v\|_{L^2_T(H^2(\mathbb{R}^2))}^2\|v\|_{L^\infty_T(H^2(\mathbb{R}^2))},
    \end{equation}
        \begin{eqnarray}
       H_4&=&C\left| \int^T_0 (v\cdot\nabla v_2|\Delta\psi)_{H^1}dt\right|\\
       &=&C\left|\sum_{j,k=1}^2 \int^T_0 (\partial_k(v_j v_2)|\partial_k\partial_j\psi)_{H^1}dt\right|\nonumber\\
           &\leq& C\|\nabla v\|_{L^2_T(H^1 (\mathbb{R}^2))}\|v\|_{L^2_T(L^\infty(\mathbb{R}^2))}
           \|\nabla^2 \psi\|_{L^\infty_T(H^1(\mathbb{R}^2))}
           +C\|\nabla v\|_{L^2_T(L^4(\mathbb{R}^2))}^2\|\nabla^3 \psi\|_{L^\infty_T(L^2(\mathbb{R}^2))}\nonumber
        \end{eqnarray}
        \begin{eqnarray}
         H_6&=&C\left| \int^T_0 (\nabla v_2|\nabla (v\cdot\nabla \psi))_{H^1}dt\right|\label{m0-E3.26}\\
         &\leq& C\|\nabla v\|_{L^2_T(H^2(\mathbb{R}^2))}^2\|\nabla \psi\|_{L^\infty_T(H^2(\mathbb{R}^2))}
         +C\|\nabla v_2\|_{L^2_T(H^1(\mathbb{R}^2))}
         \|v\|_{L^2_T(L^\infty(\mathbb{R}^2))}\|\nabla^2 \psi\|_{L^\infty_T(H^1(\mathbb{R}^2))}.\nonumber
            \end{eqnarray}
From
(\ref{m0-E3.18-00}), (\ref{m0-E3.18})-(\ref{m0-E3.20}) and (\ref{m0-E3.23})-(\ref{m0-E3.26}),   one can obtain (\ref{m0-E3.17}).
\end{proof}

At the end of this section, we give the proofs of two technical Lemmas  \ref{m0-L3.5}-\ref{m0-L3.6} that are needed in establishing the key \textit{a priori} estimates. Using the condition $\mathrm{div}v=0$, we can replace $\partial_1v_1$ by $\partial_2 v_2$ in various calculations in the proof of Lemma \ref{m0-L3.5}.

\noindent\textbf{Proof of Lemma \ref{m0-L3.5}.}
  Using the fact that $\mathrm{div} v=0$, the integration by parts,  H\"{o}lder's inequality and Sobolev embedding Theorem,
  we have
    \begin{eqnarray*}
      &&\left|\int^T_0\int  \partial_2\psi  \partial_2^3\psi\partial_2^3 v_2  dxdt
    \right|\\
            &=&\left|\int^T_0\int  \partial_2\psi  \partial_2^3\psi\partial_2^2\partial_1 v_1  dxdt
    \right|\\
    &=&\left| \int^T_0\int \left(-\partial_2^2  v_1 \partial_1\partial_2\psi  \partial_2^3\psi
    -\partial_2\psi \partial_2^2  v_1 \partial_1\partial_2^3\psi
    \right)  dxdt
    \right|\\
    &=&\left| \int^T_0\int \left(-\partial_2^2  v_1 \partial_1\partial_2\psi  \partial_2^3\psi
    +\partial_2^2\psi \partial_2^2  v_1 \partial_1\partial_2^2\psi
    +\partial_2\psi \partial_2^3  v_1\partial_1\partial_2^2\psi
    \right)  dxdt
    \right|\\
    &\leq& C\|\partial_2^2  v_1\|_{L^2_T(L^4(\mathbb{R}^2))}\|\partial_1\partial_2 \psi\|_{L^2_T(L^4(\mathbb{R}^2))}
    \|\partial_2^3\psi\|_{L^\infty_T(L^2(\mathbb{R}^2))}\\
    &&+C\|\partial_2^2\psi\|_{L^\infty_T(L^4(\mathbb{R}^2))}
    \|\partial_2^2 v_1\|_{L^2_T(L^4(\mathbb{R}^2))}\|\partial_1\partial_2^2\psi\|_{L^2_T(L^2(\mathbb{R}^2))}\\
            &&
    +C\|\partial_2\psi\|_{L^\infty_T(L^\infty(\mathbb{R}^2))}
    \|\partial_2^3 v_1\|_{L^2_T(L^2(\mathbb{R}^2))}\|\partial_1\partial_2^2\psi\|_{L^2_T(L^2(\mathbb{R}^2))}\\
            &\leq& C\|\nabla\psi\|_{L^\infty_T(H^2(\mathbb{R}^2))}\|\partial_1\nabla\psi\|_{L^2_T(H^1(\mathbb{R}^2))}
            \|\nabla v\|_{L^2_T(H^2(\mathbb{R}^2))}.
    \end{eqnarray*}
Similarly, we can obtain (\ref{m0-E3.10})-(\ref{m0-E3.11}).   Using the fact that $\mathrm{div} v=0$, the integration by parts,  H\"{o}lder's inequality and Sobolev embedding Theorem,
  we obtain
    \begin{eqnarray*}
      &&\left|\int^T_0\int  (\partial_2^2\psi)^2  \partial_2  v_2  dxdt
    \right|\\
            &=&\left|\int^T_0\int  (\partial_2^2\psi)^2  \partial_1 v_1  dxdt
    \right|\\
    &=&\left| \int^T_0\int  2   v_1 \partial_1\partial_2^2\psi  \partial_2^2\psi  dxdt
    \right|\\
    &\leq& C\|  v_1\|_{L^2_T(L^\infty(\mathbb{R}^2))}\|\partial_1\partial_2^2 \psi\|_{L^2_T(L^2(\mathbb{R}^2))}
    \|\partial_2^2\psi\|_{L^\infty_T(L^2(\mathbb{R}^2))}.
    \end{eqnarray*}
    Similarly, we can obtain (\ref{m0-E3.12}).
 {\hfill
$\square$\medskip}

From the idea in Remark \ref{m0-Rem1.2}, we can give the proof of the key lemma \ref{m0-L3.6}.
Here, it would
be useful (and it may be also necessary) to replace $v_2$. In fact,
via (\ref{m0-E1.5-N2})$_1$, we can re-write
    \begin{equation}
      v_2=-(\partial_t\psi+v\cdot\nabla \psi ). \label{m0-E3.9}
    \end{equation}
The above substitution for $v_2$  has the advantage that it reduces
space-time integral estimates to estimates on time slices and space times
integral with higher order nonlinearities and fast dissipation. The latter
is smaller by the initial smallness assumptions.
To prove Lemma \ref{m0-L3.6}, we give the following lemma, where we use the equation (\ref{m0-E3.9})   to bounded the term $\int^T_0\int \partial_2\psi \partial_2 v_2(\partial_2^3\psi)^2 dxdt$.
\begin{lem}\label{m0-L3.4}
 Under the conditions of Lemma \ref{m0-E3.3-00}, then there holds
    \begin{eqnarray}
     \left|\int^T_0\int  \partial_2\psi \partial_2 v_2(\partial_2^3\psi)^2 dxdt
    \right|
    &\leq& CA_T^4(1+A_T), \label{m0-E3.8}
    \end{eqnarray}
    where   $C$ is a positive constant  independent of $T$.
\end{lem}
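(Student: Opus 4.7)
The plan is to exploit the first equation $\partial_t\psi + v\cdot\nabla\psi + v_2 = 0$ of (\ref{m0-E1.5-N2}) as suggested in Remark \ref{m0-Rem1.2}, writing $\partial_2 v_2 = -\partial_t\partial_2\psi - \partial_2(v\cdot\nabla\psi)$. This decomposes the integral as
\begin{equation*}
I := \int_0^T\!\int \partial_2\psi\,\partial_2 v_2\,(\partial_2^3\psi)^2 \,dxdt = -\int_0^T\!\int \partial_2\psi\,\partial_t\partial_2\psi\,(\partial_2^3\psi)^2 \,dxdt - \int_0^T\!\int \partial_2\psi\,\partial_2(v\cdot\nabla\psi)\,(\partial_2^3\psi)^2 \,dxdt.
\end{equation*}
The second piece on the right is a product of four $\nabla\psi$-factors and one $\nabla v$-factor; after expanding $\partial_2(v\cdot\nabla\psi)$, H\"older's inequality, Sobolev embedding, and the $L^4_T(L^\infty)$ estimate from Lemma \ref{m0-L3.2} bound it by $CA_T^5$.

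For the first piece, I use $\partial_2\psi\,\partial_t\partial_2\psi = \frac{1}{2}\partial_t[(\partial_2\psi)^2]$ and integrate by parts in $t$. The boundary contribution at $t=0,T$ is bounded by $C\|\nabla\psi\|_{L^\infty_T(H^2)}^4 \le CA_T^4$, and the remaining space-time integral $\int_0^T\int (\partial_2\psi)^2\,\partial_2^3\psi\,\partial_t\partial_2^3\psi\,dxdt$ is handled by invoking (\ref{m0-E1.5-N2})$_1$ a second time to write $\partial_t\partial_2^3\psi = -\partial_2^3(v\cdot\nabla\psi) - \partial_2^3 v_2$.

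The contribution from $-\partial_2^3(v\cdot\nabla\psi)$ expands into a sum of terms, each of which carries a factor of $\nabla v$ or of $\partial_1\psi$, and is controlled by $CA_T^5$ through H\"older's inequality, Sobolev embedding, and Lemma \ref{m0-L3.2}. The critical obstacle is the contribution from $-\partial_2^3 v_2$: no dissipation of $v_2$ in the vertical direction is available, so the term cannot be closed directly. The key idea, foreshadowed in the discussion before the lemma, is to invoke $\mathrm{div}\,v = 0$ to rewrite $\partial_2^3 v_2 = -\partial_1\partial_2^2 v_1$, and then integrate by parts in $x_1$, transferring the $\partial_1$ onto the factor $(\partial_2\psi)^2\,\partial_2^3\psi$. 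After this maneuver every resulting summand contains either $\partial_1\nabla\psi$, controlled by the dissipative norm $\|\partial_1\nabla\psi\|_{L^2_T(H^1)} \le A_T$, or $\nabla^2 v_1$, controlled by $\|\nabla v\|_{L^2_T(H^2)} \le A_T$; H\"older's inequality, Sobolev embedding, and Lemma \ref{m0-L3.2} then yield a bound of order $CA_T^4$. Collecting all contributions gives $|I| \le CA_T^4(1+A_T)$, as required.
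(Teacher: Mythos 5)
Your overall skeleton coincides with the paper's: substitute $\partial_2 v_2=-\partial_2(\partial_t\psi+v\cdot\nabla\psi)$, integrate the $\partial_t$ part by parts in time, and use the equation a second time on $\partial_t\partial_2^3\psi$. The boundary term and the commutator-type terms are handled exactly as in the paper. But two of your branches contain terms that do not close as you claim. First, the expansion of $\partial_2^3(v\cdot\nabla\psi)$ contains the top-order term $v\cdot\nabla\partial_2^3\psi$ (in particular $v_2\partial_2^4\psi$), which carries neither a $\nabla v$ factor nor a usable $\partial_1\psi$ factor, so your assertion that ``each summand carries a factor of $\nabla v$ or of $\partial_1\psi$'' fails for it. Similarly, in your ``second piece'' the sub-term $\int\partial_2\psi\,(v\cdot\nabla\partial_2\psi)(\partial_2^3\psi)^2$ cannot be bounded by $CA_T^5$ with a $T$-independent constant: the two copies of $\partial_2^3\psi$ live only in $L^\infty_T(L^2)$ and exhaust all the spatial integrability, forcing $\partial_2\psi$, $v$ and $\nabla\partial_2\psi$ into $L^\infty_x$ with time exponents summing to $1$; but $\nabla^2\psi$ is only in $L^\infty_T(H^1)\not\hookrightarrow L^p_T(L^\infty)$, and the Lemma \ref{m0-L3.2}-type interpolation for $\nabla^2\psi$ would require $\partial_1\nabla\psi\in L^2_T(H^2)$, which is not in $A_{1,T}$. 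The paper's proof survives precisely because it keeps these two problematic terms together: $\tfrac12(\partial_2\psi)^2\,v\cdot\nabla(\partial_2^3\psi)^2+\tfrac12\,v\cdot\nabla(\partial_2\psi)^2\,(\partial_2^3\psi)^2=\tfrac12\,v\cdot\nabla\bigl[(\partial_2\psi)^2(\partial_2^3\psi)^2\bigr]$, whose space integral vanishes by $\mathrm{div}\,v=0$. This cancellation is the essential ingredient your decomposition throws away.

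Second, your diagnosis of the $\partial_2^3 v_2$ contribution is backwards. The velocity has full dissipation ($-\Delta v$ in the momentum equation), so $\|\partial_2^3 v_2\|_{L^2_T(L^2)}\le\|\nabla v\|_{L^2_T(H^2)}\le A_T$ is available, and the paper closes this term directly as $C\|\partial_2\psi\|_{L^4_T(L^\infty)}^2\|\partial_2^3\psi\|_{L^\infty_T(L^2)}\|\partial_2^3 v_2\|_{L^2_T(L^2)}\le CA_T^4$ using Lemma \ref{m0-L3.2}; it is $\psi$, not $v$, that lacks vertical dissipation. Moreover your proposed replacement $\partial_2^3 v_2=-\partial_1\partial_2^2 v_1$ followed by integration by parts in $x_1$ produces the summand $(\partial_2\psi)^2\,\partial_1\partial_2^3\psi\,\partial_2^2 v_1$, and $\partial_1\partial_2^3\psi$ is a fourth-order derivative of $\psi$ that is \emph{not} controlled by $\|\partial_1\nabla\psi\|_{L^2_T(H^1)}$ (which only reaches third-order derivatives); a further integration by parts in $x_2$ would be needed to repair it. As written, both this branch and the cancellation issue above are genuine gaps.
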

\begin{proof}
From Lemma \ref{m0-L3.2}, (\ref{m0-E3.9}), the integration by parts,  H\"{o}lder's inequality and Sobolev embedding Theorem, we get
    \begin{eqnarray*}
      &&\left|\int^T_0\int  \partial_2\psi \partial_2 v_2(\partial_2^3\psi)^2 dxdt
    \right|\\
    &=&\left|\int^T_0\int   \partial_2\psi \partial_2
    (\partial_t\psi+v\cdot\nabla \psi )(\partial_2^3\psi)^2 dxdt
    \right|\\
    &=&\left| \left.\int \frac{1}{2 } (\partial_2\psi)^2 (\partial_2^3\psi)^2 dx\right|^T_0
    +\int^T_0\int \left[ - (\partial_2\psi)^2 \partial_2^3\psi\partial_2^3\partial_t\psi
    + \partial_2\psi \partial_2
    (v\cdot\nabla \psi )(\partial_2^3\psi)^2 \right]dxdt
     \right|\\
        &\leq& C\|\partial_2\psi\|_{L^\infty_T(L^\infty(\mathbb{R}^2))}^2\|\partial_2^3\psi\|_{L^\infty_T(L^2(\mathbb{R}^2))}^2 \\
        && +\left|
    \int^T_0\int \left[  (\partial_2\psi)^2 \partial_2^3\psi\partial_2^3(v\cdot\nabla\psi + v_2)
    +\partial_2\psi\partial_2(v\cdot\nabla\psi)   (\partial_2^3\psi)^2 \right]dxdt
     \right|\\
      &\leq& C\|\nabla\psi\|^4_{L^\infty_T(H^2(\mathbb{R}^2))}
       +C\|\partial_2\psi\|_{L^4_T(L^\infty(\mathbb{R}^2))}^2\|\partial_2^3\psi\|_{L^\infty_T(L^2(\mathbb{R}^2))}
         \|\partial_2^3(v\cdot\nabla\psi)-v\cdot\nabla\partial_2^3\psi\|_{L^2_T(L^2(\mathbb{R}^2))} \\
        &&+C\|\partial_2\psi\|_{L^4_T(L^\infty(\mathbb{R}^2))}^2\|\partial_2^3\psi\|_{L^\infty_T(L^2(\mathbb{R}^2))}
        \|\partial_2^3v_2        \|_{L^2_T(L^2(\mathbb{R}^2))} \\
        &&+\left|
    \int^T_0\int \left\{\frac{1}{2 }
     \left[(\partial_2\psi)^2  v\cdot\nabla(\partial_2^3\psi)^2
   +
     v\cdot\nabla (\partial_2\psi)^2 (\partial_2^3\psi)^2\right] \right\}dxdt
     \right|\\
     &&+\left|
    \int^T_0\int \left\{ \partial_2\psi\partial_2 v\cdot\nabla \psi  (\partial_2^3\psi)^2 \right\}dxdt
     \right|\\
              &\leq& CA_T^4 (1+A_T) ,
    \end{eqnarray*}
where we use the estimates
    \begin{eqnarray*}
      &&\|\partial_2^3(v\cdot\nabla\psi)-v\cdot\nabla\partial_2^3\psi\|_{L^2_T(L^2(\mathbb{R}^2))}\\
        &\leq&C  \|\nabla^3 v\|_{L^2_T(L^2(\mathbb{R}^2))}\|\nabla \psi\|_{L^\infty_T(L^\infty(\mathbb{R}^2))}
        +C\|\nabla^2 v\|_{L^2_T(L^4(\mathbb{R}^2))}\|\nabla^2 \psi\|_{L^\infty_T(L^4(\mathbb{R}^2))}
        \\
        &&+C\|\nabla v\|_{L^2_T(L^\infty(\mathbb{R}^2))}\|\nabla^3 \psi\|_{L^\infty_T(L^2(\mathbb{R}^2))}\\
        &\leq&C\|\nabla v\|_{L^2_T(H^2(\mathbb{R}^2))}\|\nabla  \psi\|_{L^\infty_T(H^2(\mathbb{R}^2))},
    \end{eqnarray*}
and
    \begin{eqnarray*}
      &&\left|
    \int^T_0\int \left\{ \partial_2\psi\partial_2 v\cdot\nabla \psi  (\partial_2^3\psi)^2 \right\}dxdt
     \right|
     \\
     &\leq& \|\nabla\psi\|_{L^4_T(L^\infty(\mathbb{R}^2))}^2\|\nabla v\|_{L^2_T(L^\infty(\mathbb{R}^2))}\|\nabla^3\psi\|_{L^\infty_T(L^2(\mathbb{R}^2))}^2.
    \end{eqnarray*}
\end{proof}

\noindent\textbf{Proof of Lemma \ref{m0-L3.6}.}
    From Lemmas \ref{m0-L3.2}, \ref{m0-L3.5}, \ref{m0-L3.4}, (\ref{m0-E3.9}),  the integration by parts,  H\"{o}lder's inequality and Sobolev embedding Theorem, we get
    \begin{eqnarray*}
      &&\left|\int^T_0\int  \partial_2 v_2(\partial_2^3\psi)^2 dxdt
    \right|\\
        &=&\left|\int^T_0\int  \partial_2 (\partial_t
        \psi+v\cdot\nabla\psi )(\partial_2^3\psi)^2 dxdt
    \right|\\
        &=&\left|\left.\int  \partial_2
        \psi (\partial_2^3\psi)^2 dx\right|^T_0
        +\int^T_0\int \left\{  -2\partial_2\psi\partial_2^3\psi\partial_2^3\partial_t\psi
         + \partial_2 ( v\cdot\nabla\psi )(\partial_2^3\psi)^2\right\}dxdt
    \right|\\
        &\leq& C\|\partial_2\psi\|_{L^\infty_T(L^\infty(\mathbb{R}^2))}\|\partial_2^3\psi\|_{L^\infty_T(L^2(\mathbb{R}^2))}^2 \\
            &&  +\left|
        \int^T_0\int \left\{  2\partial_2\psi\partial_2^3\psi\partial_2^3(v\cdot\nabla\psi+
        v_2) + \partial_2 ( v\cdot\nabla\psi )(\partial_2^3\psi)^2\right\}dxdt
    \right|\\
        &\leq& C\|\nabla\psi\|_{L^\infty_T(H^2(\mathbb{R}^2))}^3
        +C\|\nabla \psi\|_{L^\infty_T(H^2(\mathbb{R}^2))}\|\nabla\partial_1\psi\|_{L^2_T(H^1(\mathbb{R}^2))}\|\nabla v\|_{L^2_T(H^2(\mathbb{R}^2))}\\
           &&  +\left|
        \int^T_0\int   \left\{ 2\partial_2\psi\partial_2^3\psi[\partial_2^3(v\cdot\nabla\psi)-v\cdot\nabla\partial_2^3\psi]
         +
           \partial_2\psi  v \cdot\nabla(\partial_2^3\psi)^2    \right.\right.\\
        &&\left.\left. + v\cdot\nabla \partial_2\psi(\partial_2^3\psi)^2 + \partial_2   v\cdot\nabla\psi (\partial_2^3\psi)^2\right\}dxdt
    \right|\\
    &\leq&
    C A_T^3(1+A_T^2),
    \end{eqnarray*}
    where we use the estimate
    \begin{eqnarray*}
    &&   \left|
        \int^T_0\int   \left\{ 2\partial_2\psi\partial_2^3\psi[\partial_2^3(v\cdot\nabla\psi)-v\cdot\nabla\partial_2^3\psi]
          + \partial_2   v\cdot\nabla\psi (\partial_2^3\psi)^2\right\}dxdt
    \right|\\
        &\leq& \left|
        \int^T_0\int    2\partial_2\psi\partial_2^3\psi(\partial_2^3v\cdot\nabla\psi+3\partial_2^2v_1 \partial_1\partial_2\psi
        +3\partial_2^2v_2\partial_2^2\psi
         )  dxdt
    \right|\\
    &&+\left|
        \int^T_0\int   6\partial_2\psi\partial_2^3\psi
        \partial_2 v_1\partial_1\partial_2^2\psi   dxdt
    \right|
         +\left|
        \int^T_0\int  7\partial_2\psi
        \partial_2 v_2 (\partial_2^3\psi)^2   dxdt
    \right|\\
       && +\left|
        \int^T_0\int
        \partial_2 v_1\partial_1 \psi (\partial_2^3\psi)^2   dxdt
    \right|\\
        &\leq&C\|\nabla\psi\|_{L^4_T(L^{ \infty}(\mathbb{R}^2))}^2\|\partial_2^3\psi\|_{L^\infty_T(L^2(\mathbb{R}^2))}\|\nabla^3 v\|_{L^2_T(L^2(\mathbb{R}^2))}
        \\
        &&+C\|\nabla\psi\|_{L^\infty_T(L^{ \infty}(\mathbb{R}^2))}\|\partial_3^3\psi\|_{L^\infty_T(L^2(\mathbb{R}^2))}\|\nabla^2 v\|_{L^2_T(L^4(\mathbb{R}^2))}
        \|\partial_1\nabla\psi\|_{L^2_T(L^4(\mathbb{R}^2))}
        \\
            &&+C\|\nabla\psi\|_{L^\infty_T(H^2(\mathbb{R}^2))}^2\|\partial_1\nabla\psi\|_{L^2_T(H^2(\mathbb{R}^2))}
            \|\nabla v\|_{L^2_T(H^2(\mathbb{R}^2))} \\
            &&+C\|\partial_2\psi\|_{L^\infty_T(L^\infty(\mathbb{R}^2))}
            \|\partial_2^3\psi\|_{L^\infty_T(L^2(\mathbb{R}^2))}\|\partial_2 v\|_{L^2_T(L^\infty(\mathbb{R}^2))}
            \|\partial_1\partial_2^2\psi\|_{L^2_T(L^2(\mathbb{R}^2))}\\
     &&+    CA_T^4(1+A_T)  +C\|\partial_2 v\|_{L^2_T(L^\infty(\mathbb{R}^2))}
     \|\partial_1\psi\|_{L^2_T(L^\infty(\mathbb{R}^2))}\|\partial_3^3\psi\|_{L^\infty_T(L^2(\mathbb{R}^2))}^2  \\
      &\leq&  C A_T^3(1+A_T^2).
    \end{eqnarray*}
 {\hfill
$\square$\medskip}

\section{A priori estimates for $A_{2,T}$}\label{m0-S4}
   Due to Remark \ref{m0-Rem1.3}, we need to
 estimate $A_{2,T}$ as follows. At first, we estimate the easy one
 $$(\partial_1\psi,v)1_{\{|\xi|\geq1\}\cup\{|\xi|< 1,|\xi_1|\leq|\xi|^2
 \}}.$$

    \begin{lem}\label{L3.1}
 Under the conditions in Lemma \ref{m0-E3.3-00}, then there holds
    \begin{equation}
   \left\|\widehat{v}1_{\{|\xi|\geq 1\}}
    \right\|_{L^2_T(L^1_\xi)}
    +   \left\|\widehat{\partial_1\psi}1_{\{|\xi|\geq 1\}}
    \right\|_{L^2_T(L^1_\xi)}     \leq   C  A_{1,T},\label{3.1}
    \end{equation}
        \begin{equation}
   \left\|\widehat{v}1_{\{|\xi|< 1,|\xi_1|\leq|\xi|^2\}}
    \right\|_{L^2_T(L^1_\xi)}
    +   \left\|\widehat{\partial_1\psi}1_{\{|\xi|< 1,|\xi_1|\leq|\xi|^2\}}
    \right\|_{L^2_T(L^1_\xi)}     \leq   C  A_{1,T},\label{3.2}
    \end{equation}
    where   $C$ is a positive constant  independent of $T$.
\end{lem}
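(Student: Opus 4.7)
I would establish (\ref{3.1}) and (\ref{3.2}) by a uniform device: a weighted Cauchy--Schwarz inequality
\[
\int_E |\widehat f(\xi)|\,d\xi \leq \Bigl(\int_E w(\xi)^{-2}\,d\xi\Bigr)^{1/2}\Bigl(\int_E w(\xi)^2|\widehat f(\xi)|^2\,d\xi\Bigr)^{1/2},
\]
in which the weight $w$ is chosen so that the geometric factor $(\int_E w^{-2})^{1/2}$ is finite and the second factor reproduces, via Plancherel, a norm already present in $A_{1,T}$. Integrating the resulting pointwise-in-$t$ bound in $L^2_T$ then finishes the lemma.

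For (\ref{3.1}) on $E=\{|\xi|\geq 1\}$ I would take $w(\xi)=|\xi|^2$. Since $\int_{|\xi|\geq 1}|\xi|^{-4}\,d\xi$ is a finite geometric constant in dimension two, the scheme yields
\[
\|\widehat v\,1_{\{|\xi|\geq 1\}}\|_{L^1_\xi}\leq C\|\Delta v\|_{L^2},\qquad \|\widehat{\partial_1\psi}\,1_{\{|\xi|\geq 1\}}\|_{L^1_\xi}\leq C\|\Delta\partial_1\psi\|_{L^2}.
\]
Taking $L^2_T$-norms and using $\|\Delta v\|_{L^2_T(L^2)}\leq \|\nabla v\|_{L^2_T(H^2)}$ together with $\|\Delta\partial_1\psi\|_{L^2_T(L^2)}\leq\|\partial_1\nabla\psi\|_{L^2_T(H^1)}$ closes the estimate by $A_{1,T}$.

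For (\ref{3.2}) on $\Omega_1=\{|\xi|<1,\,|\xi_1|\leq|\xi|^2\}$ I would take $w(\xi)=|\xi|$. The crucial geometric input is
\[
\int_{\Omega_1}|\xi|^{-2}\,d\xi\leq C,
\]
which I would verify in polar coordinates $\xi=(r\cos\theta,r\sin\theta)$: the constraint $|\xi_1|\leq|\xi|^2$ reads $|\cos\theta|\leq r$, so the angular slice has measure at most $4\arcsin r\leq Cr$ for $r\in(0,1]$, and the integral collapses to $\int_0^1 r^{-1}\cdot Cr\,dr\leq C$. Cauchy--Schwarz then yields $\|\widehat v\,1_{\Omega_1}\|_{L^1_\xi}\leq C\|\nabla v\|_{L^2}$, while the identity $|\widehat{\partial_1\psi}|=|\xi_1||\widehat\psi|=|\xi|^{-1}|\widehat{\partial_1\nabla\psi}|$ (valid pointwise in $\xi$) gives $\|\widehat{\partial_1\psi}\,1_{\Omega_1}\|_{L^1_\xi}\leq C\|\partial_1\nabla\psi\|_{L^2}$. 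After taking $L^2_T$-norms these two bounds are absorbed into $A_{1,T}$.

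The only genuinely delicate point is the integrability of $|\xi|^{-2}$ on $\Omega_1$: in two dimensions this weight is \emph{not} integrable near the origin on the full ball $\{|\xi|<1\}$, and the splitting of the low-frequency ball along the parabolic cone $|\xi_1|\leq|\xi|^2$ is tuned precisely so that the angular thinness of the slice, of order $r$, compensates the radial singularity $r^{-1}$. The complementary region $\{|\xi|<1,\,|\xi_1|>|\xi|^2\}$ will therefore not be amenable to this argument and must be treated separately later, presumably by invoking the transport equation (\ref{m0-E1.5-N2})$_1$ together with the initial-data hypothesis (\ref{m0-E1.3-09}), as previewed in Remark \ref{m0-Rem1.3}.
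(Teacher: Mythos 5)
Your argument is correct and is essentially the paper's own proof: both estimates follow from a weighted Cauchy--Schwarz inequality in $\xi$ followed by Plancherel, landing on $\|\nabla^2 v\|_{L^2_T(L^2)}$ and $\|\nabla^2\partial_1\psi\|_{L^2_T(L^2)}$ for (\ref{3.1}) and on $\|\nabla v\|_{L^2_T(L^2)}$ and $\|\nabla\partial_1\psi\|_{L^2_T(L^2)}$ for (\ref{3.2}), all of which are controlled by $A_{1,T}$. The only cosmetic difference is in (\ref{3.2}): the paper uses the anisotropic weight $|\xi_1|^{3/8}|\xi_2|^{1/4}$, whose inverse square is integrable on the unit ball as a product, and then invokes $|\xi_1|\leq|\xi|^2$ to dominate the weight by $|\xi|$, whereas you use the isotropic weight $|\xi|$ and feed the constraint $|\xi_1|\leq|\xi|^2$ into the $O(r)$ angular measure of the slice; both computations are valid and yield the same bound.
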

\begin{proof}
  Using the  H\"{o}lder's inequality, we have
     \begin{eqnarray*}
   \left\|\widehat{v}1_{\{|\xi|\geq 1\}}
    \right\|_{L^2_T(L^1_\xi)}
    +   \left\|\widehat{\partial_1\psi}1_{\{|\xi|\geq 1\}}
    \right\|_{L^2_T(L^1_\xi)}    & \leq &  C   \left\|(|\xi|^2\widehat{v} ,|\xi|^2|\xi_1|\widehat{\psi})
    \right\|_{L^2_T(L^2_\xi)} \left(\int_{|\xi|\geq1}|\xi|^{-4}d\xi\right)^\frac{1}{2}\\
    &\leq& C\|(\nabla^2 v,\nabla^2\partial_1\psi)\|_{L^2_T(L^2)},
    \end{eqnarray*}
    which finish the proof of (\ref{3.1}).
     Using the  H\"{o}lder's inequality, we have
     \begin{eqnarray*}
   &&\left\|(\widehat{v}1_{\{|\xi|< 1,|\xi_1|\leq|\xi|^2\}},\widehat{\partial_1\psi}1_{\{|\xi|< 1,|\xi_1|\leq|\xi|^2\}})
    \right\|_{L^2_T(L^1_\xi)}  \\
    &  \leq &  C   \left\|(|\xi_1|^\frac{3}{8}|\xi_2|^\frac{1}{4}\widehat{v}1_{\{|\xi|< 1,|\xi_1|\leq|\xi|^2\}},
    |\xi_1|^\frac{3}{8}|\xi_2|^\frac{1}{4}\widehat{\partial_1\psi}1_{\{|\xi|< 1,|\xi_1|\leq|\xi|^2\}})
    \right\|_{L^2_T(L^2_\xi)} \left(\int_{|\xi|<1}|\xi_1|^{-\frac{3}{4}}|\xi_2|^{-\frac{1}{2}}d\xi\right)^\frac{1}{2}\\
    &\leq& C   \left\|(|\xi|^\frac{3}{4}|\xi_2|^\frac{1}{4}\widehat{v} ,
    |\xi|^\frac{3}{4}|\xi_2|^\frac{1}{4}\widehat{\partial_1\psi} )
    \right\|_{L^2_T(L^2_\xi)}^2\leq C\|(\nabla v,\nabla\partial_1\psi)\|_{L^2_T(L^2)}
    \end{eqnarray*}
    which finish the proof of (\ref{3.2}).
\end{proof}

To estimate the difficult part $(\partial_1\psi,v)1_{ \{|\xi|< 1,|\xi_1|>|\xi|^2
 \}}$, we need obtain the following expressions of solutions.
From (\ref{m0-E3.15}), we have
    \begin{equation}
  \left\{
  \begin{array}{l}
    \partial_t \psi  + v_2=F,\ \ (t,x)\in\mathbb{R}^+\times\mathbb{R}^2,\\
        \partial_t v_1  -\Delta v_1 - \partial_1\partial_2\psi =G_1,\\
        \partial_t v_2  -\Delta v_2 + \partial_1^2\psi =G_2,\\
            \mathrm{div}v=0,\\
            (\psi,v)|_{t=0}=(\psi_0,v_0),
  \end{array}
  \right. \label{2}
\end{equation}
where
    $$
    F=-v\cdot\nabla \psi,
    $$
    $$
    G=(G_1,G_2)^\top=-v\cdot\nabla v-\displaystyle{\sum_{i,j=1}^2}\nabla(-\Delta)^{-1}
      [\partial_iv_j\partial_j v_i+\partial_i\partial_j(\partial_i \psi\partial_j \psi)
      ]-\displaystyle{\sum_{j=1}^2}\partial_j[\nabla \psi\partial_j \psi].
    $$
Using the classical Fourier analysis method, we obtain that
    \begin{equation}
      \widehat{\psi}(t,\xi)=M_{11}(t,\xi)\widehat{\psi}_0(\xi)+M_{12}(t,\xi)\widehat{v}_{0,2}(\xi)
      +\int^t_0M_{11}(t-s,\xi)\widehat{F}(\xi,s)ds+\int^t_0M_{12}(t-s,\xi)\widehat{G_2}(\xi,s)ds,\label{3.4}
    \end{equation}
        \begin{equation}
      \widehat{v}_1(t,\xi)=M_{21}(t,\xi)\widehat{\psi}_0(\xi)+M_{22}(t,\xi)\widehat{v}_{0,1}(\xi)
      +\int^t_0M_{21}(t-s,\xi)\widehat{F}(\xi,s)ds+\int^t_0M_{22}(t-s,\xi)\widehat{G_1}(\xi,s)ds,
    \end{equation}
            \begin{equation}
      \widehat{v}_2(t,\xi)=M_{31}(t,\xi)\widehat{\psi}_0(\xi)+M_{32}(t,\xi)\widehat{v}_{0,2}(\xi)
      +\int^t_0M_{31}(t-s,\xi)\widehat{F}(\xi,s)ds+\int^t_0M_{32}(t-s,\xi)\widehat{G_2}(\xi,s)ds,
    \end{equation}
where
    \begin{equation}
      M_{11}(t,\xi)=\frac{\lambda_+e^{\lambda_-t}-\lambda_-e^{\lambda_+t}}{\lambda_+-\lambda_-},
      \   M_{12}(t,\xi)=\frac{ e^{\lambda_-t}- e^{\lambda_+t}}{\lambda_+-\lambda_-},\label{3.7}
    \end{equation}
        \begin{equation}
      M_{21}(t,\xi)=\xi_1\xi_2\frac{ e^{\lambda_-t}- e^{\lambda_+t}}{\lambda_+-\lambda_-},
      \   M_{22}(t,\xi)=\frac{\lambda_+e^{\lambda_+t}-\lambda_-e^{\lambda_-t}}{\lambda_+-\lambda_-},
    \end{equation}
            \begin{equation}
      M_{31}(t,\xi)=-|\xi_1|^2\frac{ e^{\lambda_-t}- e^{\lambda_+t}}{\lambda_+-\lambda_-},
      \   M_{32}(t,\xi)=M_{22}(t,\xi),
    \end{equation}
    \begin{equation}
      \lambda_\pm=-\frac{|\xi|^2}{2}\pm i\frac{\sqrt{4\xi^2_1-\xi^4}}{2},
      \ \textrm{ when }|\xi_1|> |\xi|^2. \label{3.10}
    \end{equation}
\begin{lem}\label{L3.2}
 Under the conditions in Lemma \ref{m0-E3.3-00}, then there holds
        \begin{equation}
   \left\|\widehat{v}1_{\{|\xi|< 1,|\xi_1|>|\xi|^2\}}
    \right\|_{L^2_T(L^1_\xi)}
    +   \left\|\widehat{\partial_1\psi}1_{\{|\xi|< 1,|\xi_1|>|\xi|^2\}}
    \right\|_{L^2_T(L^1_\xi)}  \leq CA_{2,0}+ C   A_{T}^2,\label{3.3}
    \end{equation}
    where   $C$ is a positive constant  independent of $T$.
\end{lem}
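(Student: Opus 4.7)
\medskip\noindent\textbf{Proof proposal.} The plan is to split each of $\widehat v$ and $\widehat{\partial_1\psi}$ through the Fourier representations \eqref{3.4} and the analogous ones for $\widehat v_1,\widehat v_2$ into a linear contribution depending on $\widehat{\psi}_0,\widehat{v}_0$ and a Duhamel contribution depending on $\widehat F,\widehat G$, and then to control each piece on $R=\{|\xi|<1,\,|\xi_1|>|\xi|^2\}$ via sharp pointwise bounds on the multipliers $M_{ij}$ of \eqref{3.7}--\eqref{3.10}.

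First, setting $\mu=\tfrac12\sqrt{4\xi_1^2-|\xi|^4}$ so that $\lambda_\pm=-|\xi|^2/2\pm i\mu$, $|e^{\lambda_\pm t}|=e^{-|\xi|^2 t/2}$, and $\mu\in[\tfrac{\sqrt 3}{2}|\xi_1|,|\xi_1|]$ on $R$, a direct trigonometric computation gives $|M_{11}|,|M_{22}|,|M_{32}|\leq Ce^{-|\xi|^2 t/2}$, $|M_{21}|\leq C|\xi_2|e^{-|\xi|^2 t/2}$, $|M_{31}|\leq C|\xi_1|e^{-|\xi|^2 t/2}$, and $|M_{12}|\leq (C/|\xi_1|)e^{-|\xi|^2 t/2}$. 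Every multiplier appearing in $\widehat v$ or in $\widehat{\partial_1\psi}=i\xi_1\widehat\psi$ is therefore bounded by $Ce^{-|\xi|^2 t/2}$ on $R$, since the $1/|\xi_1|$ singularity of $M_{12}$ is absorbed by the $i\xi_1$ prefactor. As an immediate consequence the linear part obeys
\[
\bigl\|(\widehat v,\widehat{\partial_1\psi})_{\mathrm{lin}}\mathbf 1_R\bigr\|_{L^2_T(L^1_\xi)}\leq C\bigl\|e^{-|\xi|^2 t/2}\bigl(|\widehat{\nabla\psi}_0|+|\widehat{v}_0|\bigr)\bigr\|_{L^2([0,\infty);L^1_\xi)}\leq CA_{2,0}.
\]

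For the Duhamel contribution I would control $\int_0^t e^{-|\xi|^2(t-s)/2}\widehat N(s,\xi)\,ds$ in $L^2_T(L^1_\xi(R))$ for $N\in\{F=-v\cdot\nabla\psi,\,G_1,\,G_2\}$. Using $\mathrm{div}\,v=0$ together with the bounded Riesz-type multipliers appearing in $G$, I would rewrite $G$ in divergence form so that $|\widehat G(s,\xi)|\leq C|\xi||\widehat{\widetilde Q}(s,\xi)|$ for suitable bilinear quantities $\widetilde Q$ in $v$ and $\nabla\psi$. The Hausdorff--Young bound $\|v\|_{L^2_T L^\infty_x}\leq C\|\widehat v\|_{L^2_T L^1_\xi}\leq A_{2,T}$, together with $\|\nabla\psi\|_{L^\infty_T L^2_x}\leq A_{1,T}$, then gives $\|F\|_{L^2_T(L^2_x)}+\|\widetilde Q\|_{L^2_T(L^2_x)}\leq CA_T^2$. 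A combination of Young's convolution inequality in time, Plancherel in $\xi$, and Cauchy--Schwarz on $R$ closes the estimate; the geometric restriction $|\xi_1|>|\xi|^2$ is crucial here, since it makes weighted integrals such as $\int_R |\xi_1|/|\xi|^2\,d\xi$ convergent although they would diverge on the full disk $\{|\xi|<1\}$. The main obstacle is precisely this last step: the $|\xi|^{-1}$-type singularities coming from the heat-type time convolution with $e^{-|\xi|^2(t-s)/2}$ must be absorbed by the combination of the divergence form of $G$ (providing an extra factor of $|\xi|$) and the integrable weights available on $R$ but not on the full low-frequency ball.
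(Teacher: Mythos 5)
Your treatment of the linear part is correct and coincides with the paper's: on $\{|\xi_1|>|\xi|^2\}$ the eigenvalues are complex conjugates, $|e^{\lambda_\pm t}|=e^{-|\xi|^2t/2}$, the bounds (\ref{3.12})--(\ref{3.13}) hold, and the $|\xi_1|^{-1}$ singularity of $M_{12}$ is cancelled by the $\xi_1$ from $\widehat{\partial_1\psi}$, so the linear contribution is controlled by $A_{2,0}$ exactly as in (\ref{3.14})--(\ref{3.15}). The gap is in the Duhamel part. Your proposed mechanism --- put the source in $L^2_T(L^2_\xi)$ by Plancherel, apply Young's inequality in time to the kernel $e^{-|\xi|^2(t-s)/2}$ (which costs a factor $|\xi|^{-2}$, or $|\xi|^{-1}$ after using the divergence structure of $G$), and then pass from $L^2_\xi$ to $L^1_\xi(R)$ by Cauchy--Schwarz --- requires the weight integral $\int_R|\xi|^{-2}\,d\xi$ (or $\int_R|\xi_1|^2|\xi|^{-4}\,d\xi$ for the $F$-term, where the multiplier is $\xi_1M_{11}$) to be finite. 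In $\mathbb{R}^2$ these integrals diverge logarithmically at $\xi=0$, and the restriction to $R=\{|\xi|<1,\,|\xi_1|>|\xi|^2\}$ does not repair this: on each dyadic annulus $|\xi|\sim 2^{-j}$ the excised parabolic strip $\{|\xi_1|\le|\xi|^2\}$ has measure $O(2^{-3j})$ while the divergence is produced by the bulk region $|\xi_1|\sim|\xi|$, which lies entirely inside $R$. (Your sample integral $\int_R|\xi_1|/|\xi|^2\,d\xi$ is in fact already convergent on the whole disk, since $|\xi_1|/|\xi|^2\le|\xi|^{-1}$, so it is not the obstruction; the obstruction is the genuinely divergent $\int_R|\xi|^{-2}\,d\xi$ that your scheme actually needs.) This is an endpoint failure, not a technicality, and "Young in time $+$ Plancherel $+$ Cauchy--Schwarz" cannot close the estimate.

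The paper avoids this endpoint by three ingredients absent from your sketch. First, it measures the kernels in anisotropic mixed norms, obtaining the decay (\ref{3.16})--(\ref{3.17}),
\begin{equation*}
\|\xi_1M_{11}(t)1_{A}\|_{L^2_{\xi_2}(L^\infty_{\xi_1})}+\|\xi_1M_{12}(t)1_{A}\|_{L^1_{\xi_2}(L^2_{\xi_1})}\le Ct^{-3/4}.
\end{equation*}
Second, it exploits the convolution structure of $\widehat F$ and $\widehat G_2$ to place them in the dual mixed norms $L^{4/3}_T(L^2_{\xi_2}(L^1_{\xi_1}))$ and $L^{4/3}_T(L^\infty_{\xi_2}(L^2_{\xi_1}))$, using the interpolation $\|\widehat{\nabla\psi}\|_{L^4_T(L^2_{\xi_2}(L^1_{\xi_1}))}\le\|\widehat{\nabla\psi}\|_{L^\infty_T(L^2_\xi)}^{1/2}\,\||\xi_1|\widehat{\nabla\psi}\|_{L^2_T(L^2_\xi)}^{1/2}$, which is where the anisotropic dissipation $\partial_1\nabla\psi\in L^2_T(H^1)$ enters. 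Third, the time convolution with $(t-s)^{-3/4}$ is handled by the Hardy--Littlewood--Sobolev (Riesz potential) inequality $L^{4/3}_T\to L^2_T$, not by Young's inequality. To repair your proof you would need to replace the final step by this (or an equivalent) sub-endpoint argument; as written, the Duhamel estimate does not close.
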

\begin{proof}
 When $\xi\in A=\{|\xi_1|\geq |\xi|^2\}$,  from  (\ref{3.7}), (\ref{3.10}), we get
    \begin{equation}
      |M_{11}|=\left|e^{-\frac{|\xi|^2t}{2}}
      \frac{\xi^2\sin(\frac{\sqrt{4\xi_1^2-\xi^4}}{2}t)+\sqrt{4\xi_1^2-\xi^4}\cos
      (\frac{\sqrt{4\xi_1^2-\xi^4}}{2}t)}{\sqrt{4\xi_1^2-\xi^4}}
      \right|\leq C e^{-\frac{|\xi|^2t}{2}},\label{3.12}
    \end{equation}
     \begin{equation}
      |M_{12}|=\left|e^{-\frac{|\xi|^2t}{2}}
      \frac{ 2\sin(\frac{\sqrt{4\xi_1^2-\xi^4}}{2}t) }{\sqrt{4\xi_1^2-\xi^4}}
      \right|\leq C|\xi_1|^{-1} e^{-\frac{|\xi|^2t}{2}},\label{3.13}
    \end{equation}
        then we can estimate the linear part as follows,
            \begin{equation}
      \| M_{11}\widehat{\partial_1\psi}_0\|_{L^2_T(L^1_\xi(A))} \leq C\|\xi_1e^{-\frac{|\xi|^2t}{2}}\widehat{\psi}_0\|_{L^2_T(L^1_\xi)}\leq CA_{2,0},\label{3.14}
    \end{equation}
         \begin{equation}
      \|\xi_1M_{12}\widehat{v}_{0,2}\|_{L^2_T(L^1_\xi(A))}  \leq C\| e^{-\frac{|\xi|^2t}{2}}\widehat{v}_{0,2}\|_{L^2_T(L^1_\xi)}\leq CA_{2,0}.\label{3.15}
    \end{equation}
From (\ref{3.12})-(\ref{3.13}), we obtain
                \begin{equation}
      \|\xi_1M_{11}(t)1_{A}\|_{L^2_{\xi_2}(L^\infty_{\xi_1})} \leq C  \|\xi_1e^{-\frac{|\xi|^2t}{2}}\|_{L^2_{\xi_2}(L^\infty_{\xi_1})}
       \leq C t^{-\frac{1}{2}} \| e^{-\frac{\xi_2^2t}{2}}\|_{L^2_{\xi_2}}\leq C t^{-\frac{3}{4}},\label{3.16}
    \end{equation}
     \begin{equation}
     \|\xi_1M_{12}(t)1_{A}\|_{L^1_{\xi_2}(L^2_{\xi_1})} \leq C  \| e^{-\frac{|\xi|^2t}{2}}\|_{L^1_{\xi_2}(L^2_{\xi_1})}
       \leq C   \| e^{-\frac{\xi_1^2t}{2}}\|_{L^2_{\xi_1}} \| e^{-\frac{\xi_2^2t}{2}}\|_{L^1_{\xi_2}}\leq C t^{-\frac{3}{4}}.\label{3.17}
   \end{equation}
Using   Young's inequality and the interpolation inequality, we have
    \begin{eqnarray}
       \|\widehat{F}\|_{L^\frac{4}{3}_T(L^2_{\xi_2}(L^1_{\xi_1}))}
      &\leq &
      C\|\widehat{v} \|_{L^2_T(L^1_{\xi})}\|\widehat{\nabla\psi}\|_{L^4_T(L^2_{\xi_2}(L^1_{\xi_1}))}\nonumber\\
      &\leq&
       C\|\widehat{v} \|_{L^2_T(L^1_{\xi})}\|\widehat{\nabla\psi}\|_{L^\infty_T(L^2_{\xi})}^\frac{1}{2}\||\xi_1|\widehat{\nabla\psi}
      \|_{L^2_T(L^2_{\xi})}^\frac{1}{2}\nonumber
    \\
    &\leq&CA_{1,T}A_{2,T},
    \end{eqnarray}
and combining the idea in (\ref{m0-E3.22}),
    \begin{eqnarray}
   \|\widehat{G}_2\|_{L^\frac{4}{3}_T(L^\infty_{\xi_2}(L^2_{\xi_1}))}
      &\leq &
      C\|\widehat{v}\|_{L^4_T(L^2_{\xi_2}(L^1_{\xi_1}))}\|\widehat{\nabla v}\|_{L^2_T(L^2_{\xi})}
      +
      C\|\widehat{\partial_1\nabla \psi}\|_{L^2_T(L^2_{\xi})}\|\widehat{\nabla\psi}\|_{L^4_T(L^2_{\xi_2}(L^1_{\xi_1}))}\nonumber\\
    &\leq&CA_{1,T}^2.\label{3.19}
    \end{eqnarray}
    From (\ref{3.16})-(\ref{3.19}), using the Riesz potential inequality (\cite{Stein}, Theorem 1, P119), we have
        \begin{eqnarray}
          &&\left\|1_A\xi_1\int^t_0M_{11}(t-s,\xi)\widehat{F}(\xi,s)ds\right\|_{L^2_T(L^1_\xi)}\nonumber\\
          &\leq&  \left\|\int^t_0 \|1_A\xi_1M_{11}(t-s,\xi)\|_{L^2_{\xi_2}(L^\infty_{\xi_1})}
          \|\widehat{F}(\xi,s)\|_{L^2_{\xi_2}(L^1_{\xi_1})}ds\right\|_{L^2_T}\nonumber\\
           &\leq& C \left\|\int^t_0 (t-s)^{-\frac{3}{4}}
          \|\widehat{F}(\xi,s)\|_{L^2_{\xi_2}(L^1_{\xi_1})}ds\right\|_{L^2_T}\nonumber\\
          &\leq&C\left\|\widehat{F}(\xi,s)\right\|_{L^\frac{4}{3}_T(L^2_{\xi_2}(L^1_{\xi_1}))}\leq CA_{1,T}A_{2,T},\label{3.20}
        \end{eqnarray}
    and
            \begin{eqnarray}
          &&\left\|1_A\xi_1\int^t_0M_{12}(t-s,\xi)\widehat{G}_2(\xi,s)ds\right\|_{L^2_T(L^1_\xi)}\nonumber\\
          &\leq&  \left\|\int^t_0 \|1_A\xi_1M_{12}(t-s,\xi)\|_{L^1_{\xi_2}(L^2_{\xi_1})}
          \|\widehat{G}_2(\xi,s)\|_{L^\infty_{\xi_2}(L^2_{\xi_1})}ds\right\|_{L^2_T}\nonumber\\
           &\leq& C \left\|\int^t_0 (t-s)^{-\frac{3}{4}}
          \|\widehat{G}_2(\xi,s)\|_{L^\infty_{\xi_2}(L^2_{\xi_1})}ds\right\|_{L^2_T}\nonumber\\
          &\leq&C\left\|\widehat{G}_2(\xi,s)\right\|_{L^\frac{4}{3}_T(L^\infty_{\xi_2}(L^2_{\xi_1}))}\leq CA_{1,T}^2.\label{3.21}
        \end{eqnarray}
    Combining (\ref{3.14})-(\ref{3.15}) and (\ref{3.20})-(\ref{3.21}), we get
        \begin{equation}
    \left\|\widehat{\partial_1\psi}1_{\{|\xi|< 1,|\xi_1|>|\xi|^2\}}
    \right\|_{L^2_T(L^1_\xi)}  \leq CA_{2,0}+ C  A_{T}^2.
    \end{equation}
Similarly, we can estimate $ \left\|\widehat{v}1_{\{|\xi|< 1,|\xi_1|>|\xi|^2\}}
    \right\|_{L^2_T(L^1_\xi)}$ and omit the details.
\end{proof}

From Lemmas \ref{L3.1} and \ref{L3.2}, we can immediately obtain the following lemma.

 \begin{lem}
 Under the conditions in Lemma \ref{m0-E3.3-00}, then there holds
        \begin{equation}
  A_{2,T} \leq CA_{2,0}+CA_{1,T}+ C  A_{T}^2,\label{3.23}
    \end{equation}
    where   $C$ is a positive constant  independent of $T$.
\end{lem}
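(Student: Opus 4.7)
The plan is to observe that the statement is a direct assembly of the two previous lemmas via a decomposition of Fourier space. Specifically, I partition $\mathbb{R}^2_\xi$ into the three pairwise disjoint regions
\[
 \Omega_1 = \{|\xi|\geq 1\}, \qquad \Omega_2 = \{|\xi|<1,\ |\xi_1|\leq |\xi|^2\}, \qquad \Omega_3=\{|\xi|<1,\ |\xi_1|>|\xi|^2\},
\]
so that for each of the two unknowns $f\in\{\widehat v,\widehat{\partial_1\psi}\}$ and every $t\in[0,T]$ one has $\|f(t,\cdot)\|_{L^1_\xi} \leq \sum_{j=1}^3 \|f(t,\cdot)\mathbf{1}_{\Omega_j}\|_{L^1_\xi}$. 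Taking the $L^2_T$ norm and applying Minkowski's inequality reduces the estimate for $A_{2,T}$ to bounding each of the six pieces $\|\widehat v\,\mathbf{1}_{\Omega_j}\|_{L^2_T(L^1_\xi)}$ and $\|\widehat{\partial_1\psi}\,\mathbf{1}_{\Omega_j}\|_{L^2_T(L^1_\xi)}$ for $j=1,2,3$ separately.

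For the contributions on $\Omega_1$ and $\Omega_2$ I simply invoke Lemma \ref{L3.1}, which controls both of them by $CA_{1,T}$. For the remaining, more delicate piece on $\Omega_3$, I invoke Lemma \ref{L3.2}, giving a bound by $CA_{2,0}+CA_T^2$. Summing the six contributions yields
\[
 A_{2,T} \leq C A_{1,T} + C A_{1,T} + C A_{2,0} + C A_T^2 \leq C A_{2,0}+CA_{1,T}+CA_T^2,
\]
which is exactly \eqref{3.23}. Since both ingredient lemmas are already established, there is no real obstacle; the only point worth verifying is that the three regions indeed cover $\mathbb{R}^2_\xi$ so that no part of the frequency space is missed, and this is immediate from the definitions.
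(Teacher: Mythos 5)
Your proposal is correct and is exactly the paper's argument: the paper states that the lemma follows "immediately" from Lemmas \ref{L3.1} and \ref{L3.2}, which is precisely the three-region Fourier decomposition and summation you carry out. Nothing is missing.
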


\section{Proof of Theorem \ref{m0-Thm1.1}}\label{m0-S5}

Via the analysis in \cite{Majda84}, we can get the following local existence
result following now the standard argument.
\begin{thm}\label{m0-Thm3.1}
Assume that the initial data $(\psi_0,v_0)$ satisfy (\ref{m0-E1.3})-(\ref{m0-E1.3-09}),   then there exists
     $T_0>0$ such that
     the system (\ref{m0-E1.5-N2}) has a unique local solution $(\psi,v,\nabla p)\in E^2_{T_0}$  on $[0,T_0]$.
\end{thm}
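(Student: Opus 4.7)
The plan is to prove local existence and uniqueness by a standard Picard iteration, following Majda's framework. I would construct approximate solutions $(\psi^{n+1}, v^{n+1}, \nabla p^{n+1})$ by linearization: solve the transport equation $\partial_t \psi^{n+1} + v^n \cdot \nabla \psi^{n+1} = -v_2^n$ via the method of characteristics, producing $\nabla\psi^{n+1}\in C([0,T_0]; H^2)$, and solve the Stokes-type system $\partial_t v^{n+1} - \Delta v^{n+1} + \nabla p^{n+1} = -v^n\cdot\nabla v^n - \mathrm{div}[\nabla\psi^n\otimes\nabla\psi^n]$ under $\mathrm{div}\, v^{n+1}=0$, initializing at $(\psi^0, v^0)\equiv(\psi_0,v_0)$.

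Uniform bounds on $[0,T_0]$ come from $H^2$ energy estimates on this linearized system, which are much simpler than those of Lemma \ref{m0-E3.3-00} because on a short time interval all nonlinear terms can be absorbed crudely via Sobolev embedding, H\"older and Young: schematically $A_{1,T_0}^{(n+1)} \leq CA_{1,0}+ C T_0^{1/4}(1+A_{1,T_0}^{(n)})^3$, which closes by induction to $A_{1,T_0}^{(n)}\leq 2CA_{1,0}$ once $T_0$ is chosen small depending only on $A_{1,0}$. The Fourier norm is propagated by repeating the argument of Lemmas \ref{L3.1} and \ref{L3.2}: the propagators $M_{ij}(t,\xi)$ in (\ref{3.7})--(\ref{3.10}) only involve the constant-coefficient linear part, so the iterates satisfy analogues of (\ref{3.4}) with the transport terms absorbed into the source, and the same Riesz-potential argument yields $A_{2,T_0}^{(n+1)} \leq CA_{2,0} + CA_{1,T_0}^{(n+1)} + C(A_{T_0}^{(n)})^2$, analogous to (\ref{3.23}).

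Convergence is obtained by estimating the differences $(\delta\psi^n, \delta v^n) = (\psi^{n+1}-\psi^n, v^{n+1}-v^n)$ in a weaker norm, say $L^\infty_{T_0}(L^2)\cap L^2_{T_0}(H^1)$ for $\delta v^n$ and $L^\infty_{T_0}(H^1)$ for $\delta\psi^n$; these differences solve a linear system whose source terms are bilinear in $(\delta\psi^{n-1}, \delta v^{n-1})$ and the uniformly bounded iterates, so for $T_0$ small a standard energy estimate produces a contraction. Passing to the limit via weak-$*$ compactness and interpolation yields $(\psi, v)$ solving (\ref{m0-E1.5-N2}) with the required regularity; the pressure is then read off from (\ref{m0-E3.14}), giving $\nabla p\in C([0,T_0]; H^1)$. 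Uniqueness follows from the same $L^2$-level difference estimate applied to any two solutions in $E^2_{T_0}$.

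The main obstacle is verifying that the Fourier norm $A_{2,T}$ behaves well under iteration. Lemma \ref{L3.2} and the a priori identity (\ref{3.23}) were derived for the full nonlinear solution via the explicit representation (\ref{3.4}) and the pointwise propagator bounds (\ref{3.12})--(\ref{3.17}); for the linearized iterates the same formulas apply once the transport terms $v^n\cdot\nabla\psi^{n+1}$ and $v^n\cdot\nabla v^{n+1}$ are viewed as part of $F$ and $G$. Once this translation is carried out, everything else reduces to routine parabolic--transport estimates and contraction-mapping arguments.
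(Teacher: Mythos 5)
The paper offers no actual proof of this theorem: it simply invokes ``the analysis in \cite{Majda84}'' and the ``standard argument,'' and your Picard-iteration scheme (linearized transport plus Stokes step, uniform $H^2$ bounds on a short interval, propagation of the $L^2_T(L^1_\xi)$ Fourier norm via the Duhamel representation of Section \ref{m0-S4}, contraction in a weaker norm, pressure recovered from (\ref{m0-E3.14})) is precisely that standard argument, correctly adapted to include the $A_{2,T}$ component of $E^2_{T_0}$. Your proposal is therefore consistent with, and substantially more detailed than, what the paper provides; the only cosmetic point is that the linear coupling terms $\partial_1\partial_2\psi$ and $(\Delta+\partial_2^2)\psi$ of (\ref{m0-E1.5-N2}) should be carried along in the iteration (as part of the source or of the constant-coefficient linear operator), which is harmless on a short time interval.
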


\noindent\textbf{Proof of Theorem \ref{m0-Thm1.1}.}
Theorem \ref{m0-Thm3.1} implies that the system (\ref{m0-E1.5-N2}) has a unique local strong solution $(\psi,v,\nabla p)$ on $[0,T^*)$, where $[0,T^{*})$ is the maximal existence interval of the above solution. The goal of this section is to prove that $T^*=\infty$ provided that the initial data $(\psi_0,v_0)$ satisfy (\ref{m0-E1.8}).

Assume that $(\psi,v,\nabla p)$ is the unique local strong solution of (\ref{m0-E1.5-N2}) on $[0,T^*)$, and satisfies $(\psi,v,\nabla p)\in E^2_T$, for all $T\in(0,T^*)$.
From (\ref{m0-E3.17}) and (\ref{3.23}), we have
    \begin{equation}
     A_{T}^2\leq
     C A_0^2
               + C  A_T^3(1+ A_T^2),
    \end{equation}
for all $T\in (0,T^*)$.
If  the initial data $(\psi_0,v_0)$ satisfy (\ref{m0-E1.8}), where $c_0$ satisfies
\begin{equation}
  C  \sqrt{2C }c_0(1+ 2C c_0^2)\leq \frac{1}{2},
\end{equation}
one can easily obtain
    \begin{equation}
     A_T^2\leq 2CA_0^2, \ \textrm{ for all }T\in (0,T^*).
    \end{equation}
Thus, we have that $T^*=\infty$,  and hence (\ref{m0-E1.12}) holds. From (\ref{m0-E3.14}), we see that $\nabla p\in C([0,\infty);H^1)$ and (\ref{m0-E1.13}) holds.
This finish proof of   Theorem \ref{m0-Thm1.1}.
{\hfill
$\square$\medskip}

\section*{Acknowledgements}
The author thanks Professor  Fanghua Lin   for his suggesting the problem, helpful instructions and valuable discussions.
The author thanks Professor Zhen Lei, Dong Li and  Yakov  Sinai  for their valuable discussions. Part of this work  was done when the  author
was visiting Department of Mathematics, Princeton University (2012),  Courant Institute of Mathematical
Sciences, New York University (2012), and The Institute of Mathematical Sciences, The Chinese University of Hong Kong (2014).  The   author appreciate the hospitality
from Professor Fanghua Lin, Yakov  Sinai and Zhouping Xin. This work is
  partially supported by  NSF of
China under Grants 11271322,  11331005 and 11271017, National Program for
Special Support of Top-Notch Young Professionals, Program for New Century
Excellent Talents in University NCET-11-0462, the Fundamental Research
Funds for the Central Universities (2012QNA3001).

\end{document}